\documentclass[11pt]{amsart}
\usepackage{amsmath,amssymb,amsthm,mathtools}
\usepackage{microtype}
\usepackage[hidelinks]{hyperref}
\usepackage[margin=1.08in]{geometry}
\usepackage{booktabs,array,tabularx}
\usepackage[T1]{fontenc}
\usepackage{lmodern}
\numberwithin{equation}{section}

\newtheorem{theorem}{Theorem}[section]
\newtheorem{proposition}[theorem]{Proposition}
\newtheorem{corollary}[theorem]{Corollary}
\newtheorem{lemma}[theorem]{Lemma}
\theoremstyle{definition}
\newtheorem{definition}[theorem]{Definition}
\theoremstyle{remark}
\newtheorem{remark}[theorem]{Remark}

\newcommand{\CH}{\operatorname{CH}}
\newcommand{\Pic}{\operatorname{Pic}}
\newcommand{\NS}{\operatorname{NS}}
\newcommand{\Spec}{\operatorname{Spec}}

\newcommand{\im}{\operatorname{im}}
\newcommand{\Tor}{\operatorname{Tor}}

\newcommand{\mor}{\mathrm{mor}}
\newcommand{\sing}{\mathrm{sing}}
\newcommand{\MZ}{\mathrm{M}\mathbb Z}
\newcommand{\HZ}{H\mathbb Z}
\newcommand{\HR}{HR}
\newcommand{\dsmash}{\mathbin{\wedge^{\mathbf L}}}
\newcommand{\boxt}{\mathbin{\boxtimes}}
\newcommand{\C}{\mathbb C}
\newcommand{\Z}{\mathbb Z}

\newcommand{\Dsc}{D_{\mathrm{sc}}}

\title[Secondary Chow Invariants]{Secondary Chow Invariants beyond Lawson--Massey Products}
\author{Wenchuan Hu}
\address{School of Mathematics, Sichuan University, Chengdu 610064, China}
\email{huwenchuan@gmail.com}
\email{wenchuan@scu.edu.cn}
\date{\today}
\subjclass[2020]{Primary 14C25; Secondary 14F42, 14F43, 55S30, 14H52}
\keywords{secondary invariant, higher Chow group, Massey product, motivic cohomology, Weil pairing, morphic cohomology, Lawson homology}

\begin{document}
\begin{abstract}
We construct secondary Chow invariants that cannot be recovered from Lawson–Massey realization. For every smooth complex projective curve \(C\) of positive genus and every \(n\ge2\), explicit cubical defining systems yield \(n\) distinct Massey values in \(\mathrm{CH}^{3}(C\times\mathbb P^{2},1)\), with zero full indeterminacy, forming a cyclic group of order \(n\). All corresponding triples have identical morphic classes and zero morphic, Lawson, and singular Massey brackets, despite nonzero target groups. The values are detected by the Weil pairing. We further construct secondary quotient classes of every finite order on smooth projective varieties with ample canonical bundle in every dimension at least two, and prove that generically finite pullbacks of coprime degree preserve their exact orders. These constructions obstruct integral and prime-local motivic formality, although the corresponding brackets become \(\{0\}\) after reduction modulo every integer \(m\ge2\). All specified values are torsion and vanish after rationalization.
\end{abstract}
\maketitle

\section{Introduction}\label{sec:introduction}
The purpose of this paper is to identify secondary information carried by Chow classes that is lost even by the Lawson--Massey operation.  The distinction is between two realization steps.  A secondary multiplicative structure has been constructed on Lawson homology and used to exhibit nonzero Lawson--Massey quotient classes that are killed by the Lawson cycle map to singular homology \cite{HuLawson}.  Here we work before semi-topological realization: we exhibit nonzero motivic Massey quotient classes whose corresponding morphic, and hence Lawson, brackets are already zero.  The central assertion is not merely that the source of a realization map contains torsion.  We construct actual defining systems, compute the entire indeterminacy, and show that distinct secondary values have the same realized ordered triple and the same Lawson--Massey bracket.

\subsection{The secondary operation and its invariant}
For a smooth complex quasi-projective variety $X$, let $\Pic(X)$ denote the group of line bundles up to isomorphism, and let $\CH^p(X,j)$ denote Bloch's higher Chow group of codimension $p$ and homological degree $j$; $\CH^p(X)=\CH^p(X,0)$ is the ordinary Chow group of cycles modulo rational equivalence.  We use the standard comparison
\begin{equation}\label{eq:motivic-higher Chow}
 H^m_M(X,\Z(p))\cong\CH^p(X,2p-m),
 \qquad H^{2p}_M(X,\Z(p))=\CH^p(X),
\end{equation}
where $H^m_M$ denotes motivic cohomology and $\Z(p)$ is the weight-$p$ motivic complex \cite{Bloch1986,Voevodsky2002}; see also \cite[Theorem~19.1]{MVW}.  For $a\in\CH^p(X)$, $b\in\CH^q(X)$, and $c\in\CH^r(X)$ with $ab=bc=0$, the motivic triple bracket is a nonempty coset in $\CH^s(X,1)$, where $s=p+q+r$, with full indeterminacy
\begin{equation}\label{eq:intro-indet}
 I_M(a,b,c)=a\,\CH^{q+r}(X,1)+\CH^{p+q}(X,1)\,c.
\end{equation}
Its well-defined class in that quotient will be denoted
\begin{equation}\label{eq:secondary-invariant}
 \mathfrak m_{\CH,X}(a,b,c):=[\langle a,b,c\rangle_{\CH}]
 \in\CH^s(X,1)/I_M(a,b,c).
\end{equation}
This is an invariant of the variety \emph{together with the ordered triple}, in a quotient depending on that triple.  It is not a numerical invariant of $X$ alone.  The bracket is the partially defined, set-valued operation; its quotient class is the secondary invariant.  Their compatible collection is a secondary multiplicative structure on the motivic enhancement, not a new multiplication or a claimed canonical $A_\infty$-structure on the ordinary Chow ring.

There are multiplicative realization maps
\begin{equation}\label{eq:realizations-intro}
 H^m_M(X,\Z(p))\xrightarrow{\rho}L^pH^m(X;\Z)
 \xrightarrow{\mathrm{cl}}H^m(X^{\mathrm{an}};\Z).
\end{equation}
Here $L^pH^m$ is morphic cohomology, $X^{\mathrm{an}}$ is the associated complex analytic space, and the last group is singular cohomology.  Heller supplies the multiplicative semi-topological comparison \cite[Theorem~4.11(2), Proposition~5.6]{Heller}.  Constant Betti Tate twists are identified using the usual generators $(2\pi i)^p$.  For a smooth projective $d$-fold, Friedlander--Lawson duality identifies the middle group with $L_{d-p}H_{2d-m}(X)$ for $0\le p\le d$ and $m\le2p$ \cite{FLDuality}.  Under this identification the second arrow is the Lawson cycle map
\[
 \Phi_{d-p,2d-m}:L_{d-p}H_{2d-m}(X)\longrightarrow H_{2d-m}(X^{\mathrm{an}};\Z)
\]
followed by inverse Poincar\'e duality.  Thus singular cohomology and singular homology play distinct, dual roles.

Naturality induces maps between the quotients by their respective full indeterminacies:
\begin{equation}\label{eq:quotient-chain-intro}
 Q_M(a,b,c)\xrightarrow{\bar\rho}
 Q_{\mor}(\rho a,\rho b,\rho c)\xrightarrow{\overline{\mathrm{cl}}}
 Q_{\sing}(\mathrm{cl}\rho a,\mathrm{cl}\rho b,\mathrm{cl}\rho c).
\end{equation}
These quotients are defined explicitly in Section~\ref{sec:lawson-comparison}.  Our examples give $\mathfrak m_{\CH,X}(a,b,c)\ne0$ in $\ker\bar\rho$.  By contrast, the motivic classes in \cite[Section~13]{HuLawson} have nonzero images in $Q_{\mor}$ that are killed by $\overline{\mathrm{cl}}$.  Accordingly, ``beyond Lawson--Massey'' means a proved failure of recovery from the \emph{corresponding realized triple and its bracket}.  It does not mean that the classical motivic operation is new or that the information escapes every conceivable invariant of the variety.

\subsection{Background and attribution}
Massey introduced higher cohomology operations on singular cochains; Kraines and May developed their systematic and matric formulations \cite{Massey1958,Kraines1966,May1969}.  Such operations retain null-homotopies of vanishing products and can obstruct formality.  On compact K\"ahler manifolds, the Deligne--Griffiths--Morgan--Sullivan theorem forces every defined rational singular Massey product to contain zero \cite{DGMS}.

Secondary operations in refined cohomology have  precedents.  Deninger and Schwarzhaupt study Deligne and Deligne--Beilinson Massey products \cite{Deninger1995,Schwarzhaupt2002}; Grady--Sati treat differential-cohomological versions \cite{GradySati}; and Guillou uses motivic Massey products in the theory of the punctured projective line \cite{Guillou}.  For the geometry used here, Bloch's Chow biextensions \cite{Bloch1989} and Gorchinskiy's treatment through pairings of higher Chow complexes and Weil pairings \cite[Example~3.10, Section~4.1, and Corollary~4.36]{GorchinskiyBiext} are especially close predecessors; see also the $K$-cohomological calculation in \cite[Propositions~4.25 and~4.27]{Gorchinskiy}.

The feature established here is the separation of secondary quotient classes under realization: explicit integral cubical representatives give nonzero classes modulo the \emph{full indeterminacy} that disappear before the Lawson stage.  The fixed realization family below makes this separation concrete.  The construction also yields geometric and formality applications without using the existence results of \cite{HuLawson}.

\subsection{Main results}
We first record the classical construction in the grading needed here; the proof is included only to fix signs, degrees, and the full quotient.
\begin{theorem}[Construction and full indeterminacy]\label{thm:intro-construction}
Let $X$ be smooth and complex quasi-projective, and let $p,q,r\ge0$.  If $a\in\CH^p(X)$, $b\in\CH^q(X)$, and $c\in\CH^r(X)$ satisfy $ab=bc=0$, their motivic triple bracket is a nonempty coset
\[
 \langle a,b,c\rangle_{\CH}\subset\CH^{p+q+r}(X,1)
\]
modulo exactly \eqref{eq:intro-indet}.  It defines \eqref{eq:secondary-invariant}, independently of the cocycle representatives and of a multiplicatively equivalent model.  An $N$-fold bracket of ordinary Chow classes, for $N\ge3$ and a full defining system, takes values in $\CH^{\sum p_i}(X,N-2)$.
\end{theorem}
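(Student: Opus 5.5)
The plan is to work in a strictly associative, graded-commutative-up-to-homotopy model of integral motivic cohomology in which products of cycles are defined on the nose. We will use Bloch's cubical cycle complexes $z^p(X,\bullet)$, restricted to cycles in good position with respect to all faces and to a fixed finite family of auxiliary cycles (Levine's moving lemma). Alternatively we can use the Friedlander--Suslin--Voevodsky complexes $\Z(p)$ with their $E_\infty$/dga product. Set $A^{m}(p)$ to be the degree-$m$ part of the weight-$p$ complex, cohomologically indexed so that $H^{m}(A(p))=H^m_M(X,\Z(p))\cong\CH^p(X,2p-m)$ by \eqref{eq:motivic-higher Chow}. Ordinary Chow classes then live in cohomological degree $2p$ with weight $p$, and the product $A(p)\otimes A(q)\to A(p+q)$ is an associative cdga-type product after passing to the alternating (cubical) complex. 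The first step is to record this model and the fact that it computes the ring $\bigoplus H^m_M$ multiplicatively.

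Next, choose cocycles $\alpha,\beta,\gamma$ representing $a,b,c$ in degrees $2p,2q,2r$. Since $ab=0$ and $bc=0$, there are cochains $u\in A^{2p+2q-1}(p+q)$ and $v\in A^{2q+2r-1}(q+r)$ with $du=\bar\alpha\beta$ and $dv=\bar\beta\gamma$, where $\bar x=(-1)^{1+\deg x}x$ is May's sign convention. The Leibniz rule shows that $\omega=\bar u\gamma+\bar\alpha v$ is a cocycle of degree $2s-2$ and weight $s$, so its class lies in $H^{2s-2}_M(X,\Z(s))=\CH^s(X,1)$. The bracket is defined as the set of all such classes, and it is nonempty because $u$ and $v$ exist. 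To identify the indeterminacy, note that any two choices of $u$ differ by a cocycle $z\in A^{2p+2q-1}(p+q)$, whose class lies in $\CH^{p+q}(X,1)$, and similarly choices of $v$ differ by classes in $\CH^{q+r}(X,1)$. Changing $u$ changes $[\omega]$ by $\pm[z]c$, and changing $v$ changes it by $\pm a[z']$. Since $z$ and $z'$ range over all classes independently, the set of values is exactly a coset of $a\,\CH^{q+r}(X,1)+\CH^{p+q}(X,1)\,c$. Replacing $\alpha$ by $\alpha+dx$ can be absorbed by modifying $u$ by $\pm x\beta$ and $v$ unchanged (up to adding $\pm\bar x\,dv$-type corrections), which gives the same coset; the same holds for $\beta$ and $\gamma$.

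The main obstacle is the final clause of the construction, the independence from the model. The moving-lemma subcomplexes are not closed under product with arbitrary cycles, so I would prove invariance along a zigzag of multiplicative quasi-isomorphisms. These are the inclusion of good-position subcomplexes into the full cycle complex, and the comparison to Voevodsky's complexes via Suslin--Voevodsky/MVW Theorem~19.1, taken as maps of dg algebras or at least $A_\infty$-maps with $f_1$ a quasi-isomorphism. Under a strict dga map, defining systems map to defining systems, so the bracket maps into the bracket. With full indeterminacy, a map of cosets that is a bijection on the ambient groups and on the indeterminacy subgroups gives equality of quotient classes. For $A_\infty$-morphisms I would use the standard correction term $f_2(\alpha,\beta)$, which shifts $u$ by an element absorbed into the defining system. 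Finally, for the $N$-fold statement, a full defining system $\{a_{ij}\}$ with $a_{i,i}=\alpha_i$ has $a_{ij}$ of degree $\sum_{k=i}^{j}2p_k-(j-i)$. The resulting cocycle $\sum\bar a_{1k}a_{k+1,N}$ therefore has degree $2\sum p_i-(N-2)$ and weight $\sum p_i$, and this group is $\CH^{\sum p_i}(X,N-2)$ by \eqref{eq:motivic-higher Chow}.
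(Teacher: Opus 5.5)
Your algebraic core matches the paper's argument in Section~3: the null-homotopies $u,v$, the May cocycle, varying $u,v$ by cocycles to obtain exactly $a\,\CH^{q+r}(X,1)+\CH^{p+q}(X,1)\,c$, absorbing changes of representatives, the coset argument for model independence, and the degree count for a May defining system. Two steps fail as written, though.

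The first is a degree slip. $\bar u\gamma$ has degree $(2p+2q-1)+2r=2s-1$, not $2s-2$. By \eqref{eq:motivic-higher Chow}, $H^{2s-2}_M(X,\Z(s))\cong\CH^s(X,2)$, so your sentence places the value in the wrong group. The correct statement is $[\omega]\in H^{2s-1}_M(X,\Z(s))\cong\CH^s(X,1)$, which is also what your own $N$-fold count gives for $N=3$.

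The second, more serious, problem is the model in which you define the bracket. It is not an integral differential graded algebra, for three reasons:
\begin{itemize}
\item A good-position subcomplex $z^p_{\mathcal W}(X,\bullet)$ for a fixed finite family $\mathcal W$ is not closed under the intersection product. Two cycles that each meet $\mathcal W$ properly need not meet each other properly.
\item The full complex $z^\bullet(X,\bullet)$ carries only a partially defined product.
\item The alternating projection on cubical cycles divides by the order of the symmetry group of $\square^m$. That commutative model therefore exists only rationally, and there every torsion value in this paper dies.
\end{itemize}
Consequently neither your first-paragraph model nor the zigzag ``good-position subcomplex $\hookrightarrow$ full cycle complex $\to$ Voevodsky complexes'' consists of integral algebras and multiplicative quasi-isomorphisms. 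Your invariance argument has nothing to act on. There is also a smaller omission in the $A_\infty$ variant: $f_2$ only transports the defining system, and identifying its value with $f_1(\omega)$ also needs the homotopy $f_3(\alpha,\beta,\gamma)$.

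The paper avoids all of this by fixing an honest associative, weight-graded integral model $M_X$. The mapping spectra $\mathrm{Map}(\Sigma^\infty X_+,\Sigma^{0,p}\MZ)$ are modules over the weight-zero coefficient spectrum, which is $\HZ$. Shipley's theorem then converts the resulting associative $\HZ$-algebra into a differential graded algebra. The bracket is defined there, and model independence is exactly your coset argument (Proposition~\ref{prop:naturality}). Cubical cycles enter only to evaluate one explicit finite defining system whose intersections are checked to be proper by hand.

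Your Voevodsky-complex alternative can play the role of $M_X$ only under two conditions. You must equip it with an honest associative product, and you must use the comparison with Bloch's complexes solely on cohomology rings, not as a map of algebras.
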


For a smooth projective curve $C$, $J(C)=\Pic^0(C)$ is its Jacobian, the connected group of degree-zero line-bundle classes, and $J(C)[n]=\{x:nx=0\}$ is its $n$-torsion subgroup.  Write $\mu_n\subset\C^\times$ for the $n$th roots of unity and $e_n:J(C)[n]\times J(C)[n]\to\mu_n$ for the Weil pairing of its canonical principal polarization.  The divisor convention is fixed in Section~\ref{sec:curve}, and $\operatorname{ord}(v)$ denotes the additive order of a group element.
\begin{theorem}[Exact torsion on every positive genus curve]\label{thm:intro-curves}
Let $C$ be a smooth connected complex projective curve with $g(C)\ge1$, and let $n\ge2$.  There are classes $\alpha,\gamma\in J(C)[n]$ such that
\[
 \langle\alpha,n\cdot1,\gamma\rangle_{\CH}=\{\theta\}
 \subset\CH^2(C,1),\qquad \operatorname{ord}(\theta)=n.
\]
The full indeterminacy is zero.  For disjoint divisors $D,D'$ representing these classes and functions $f,g$ with $\operatorname{div}(f)=nD$, $\operatorname{div}(g)=nD'$, a representative is
\[
 \Theta=D\cdot\Gamma_g-\Gamma_f\cdot D',\qquad
 \partial\Gamma_{(f,g)}=n\Theta,\qquad
 (\pi_C)_*[\Theta]=e_n(\alpha,\gamma)^{-1}.
\]
Here $\pi_C:C\to\Spec\C$, and $\Gamma_f$, $\Gamma_g$, $\Gamma_{(f,g)}$ are the admissible graph chains defined in Section~\ref{sec:curve}.
\end{theorem}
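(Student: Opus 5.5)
We follow the construction of Theorem~\ref{thm:intro-construction} with $p=r=1$, $q=0$, so the bracket lies in $\CH^2(C,1)$. First we pick the inputs. Since $g(C)\ge1$, $J(C)[n]\cong(\Z/n)^{2g}$, and the Weil pairing $e_n$ is perfect, so there are $\alpha,\gamma\in J(C)[n]$ with $e_n(\alpha,\gamma)$ a primitive $n$th root of unity. Represent $\alpha,\gamma$ by divisors $D,D'$ with disjoint supports, using the moving lemma on the curve. Then $n\alpha=0$ gives $f$ with $\operatorname{div}(f)=nD$, and $n\gamma=0$ gives $g$ with $\operatorname{div}(g)=nD'$. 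In the cubical higher Chow complex, the admissible graph chains satisfy $\partial\Gamma_f=nD$ and $\partial\Gamma_g=nD'$, up to the sign convention of Section~\ref{sec:curve}. These chains null-bound $\alpha\cdot(n\cdot1)$ and $(n\cdot1)\cdot\gamma$.

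Next we check the defining system. The bracket is represented by $\Theta=D\cdot\Gamma_g-\Gamma_f\cdot D'$, using the sign convention fixed in Theorem~\ref{thm:intro-construction}. We must verify that $\Theta$ is an admissible cycle: the products are intersections of $D\times\square^1$ with graphs, which are proper because the supports are disjoint and $g$ is invertible near $D$. We must also verify $\partial\Theta=0$, which holds because $D\cdot nD'=0$ as cycles when the supports are disjoint. The key identity is $\partial\Gamma_{(f,g)}=n\Theta$. The graph of $(f,g)$ in $C\times\square^2$ has faces at $f\in\{0,\infty\}$ and $g\in\{0,\infty\}$, and these are $\operatorname{div}(f)\times\Gamma_g$ and $\Gamma_f\times\operatorname{div}(g)$. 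The divisor condition gives admissibility, since $f$ and $g$ have disjoint zero and pole sets. Hence $n[\Theta]=0$.

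We then compute the indeterminacy. We have $I_M=\alpha\,\CH^1(C,1)+\CH^1(C,1)\,\gamma$. Since $\CH^1(C,1)=\C^\times$ consists of constants, the product with $\alpha$ is $\alpha\otimes\lambda\mapsto$ the image in $\CH^2(C,1)$. Pushforward to a point sends it to $\lambda^{\deg\alpha}=1$. We must show it is actually $0$ in $\CH^2(C,1)$, and this is the delicate step. We would use the exact sequence $\CH^2(C,1)\cong H^1(C,\mathcal K_2)$. Via Bloch's formula and the Gersten complex, $\alpha\cdot\lambda$ is represented by $\sum n_P\{\lambda\}$ at the points of $D$. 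Because $\deg D=0$ and these classes are constant, it equals the image of $\{\lambda\}^{\deg D}$ under pullback from the point, which is $0$. A cleaner route is that cup with constants factors through $\pi^*\CH^1(\Spec\C,1)\cdot\alpha=\pi^*(\lambda)\cdot\alpha$, and $\alpha\mapsto$ the relevant class vanishes because $\alpha$ pairs through degree. Thus $I_M=0$ and the bracket is the singleton $\{\theta\}$.

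Finally we detect the order. Consider $(\pi_C)_*:\CH^2(C,1)\to\CH^1(\Spec\C,1)=\C^\times$. On the Gersten side $\Theta$ gives the tame-symbol data $g|_D$ and $f|_{D'}$, and its pushforward is $\prod_{P}g(P)^{n_P}\big/\prod_{Q}f(Q)^{m_Q}=g(D)/f(D')$. By the standard Weil reciprocity description of the Weil pairing, $f(D')/g(D)=e_n(\alpha,\gamma)$ up to the sign convention, so $(\pi_C)_*\theta=e_n(\alpha,\gamma)^{-1}$, which has order exactly $n$. Combined with $n\theta=0$, this gives $\operatorname{ord}\theta=n$. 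We expect the main obstacle to be pinning down the signs and orientation conventions of the cubical boundary and the Weil pairing consistently, together with rigorously justifying the vanishing of the indeterminacy.
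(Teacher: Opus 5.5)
The parts that match the paper are your choice of $\alpha,\gamma$ via perfectness of $e_n$, the graph null-homotopies, the identity $\partial\Gamma_{(f,g)}=n\Theta$, and detecting the order through $(\pi_C)_*$. The gap is the step you flag as delicate: the vanishing of the full indeterminacy $I_M=\alpha\,\CH^1(C,1)+\CH^1(C,1)\,\gamma$.

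\textbf{Why your argument fails.} It uses only $\deg D=0$. The class $\sum_P m_P(\lambda)_P$ is $\alpha\cdot\pi_C^*[\lambda]$. Your claim that it ``equals the image of $\{\lambda\}^{\deg D}$ under pullback from the point'' has no justification: nothing lets you replace $\alpha$ by its degree inside this product. If it were true, it would give $([P]-[P'])\cdot[\lambda]=0$ for all points $P,P'$ and all $\lambda\in\C^\times$. That is, the product $\CH^1(C)\otimes\C^\times\to\CH^2(C,1)$ would factor through the degree. Your ``cleaner route'' asserts the same factorization. In fact only the composite with $(\pi_C)_*$ factors through the degree. So your computation $\lambda^{\deg\alpha}=1$ proves nothing unless $(\pi_C)_*$ is injective on $\CH^2(C,1)$, which you have no grounds to assume. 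It is also far more than the theorem needs.

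\textbf{The missing idea.} Use that $\alpha,\gamma$ are $n$-torsion while $\CH^1(C,1)=\C^\times$ is divisible. Given $u\in\C^\times$, choose $v$ with $v^n=u$. Then
\[
 \alpha[u]=\alpha\cdot n[v]=(n\alpha)[v]=0,\qquad [u]\gamma=[v](n\gamma)=0,
\]
so $I_M=0$.

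In your Gersten language: since $\operatorname{div}(f)=nD$, the tame symbol of $\{f,v\}\in K_2(\C(C))$ is $\sum_P m_P(u^{-1})_P$. This is the negative of $\alpha\cdot[u]$, so it kills that class. Likewise $\{g,v\}$ kills $[u]\gamma$, using $\operatorname{div}(g)=nD'$.

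This is the argument in the paper's proof of Theorem~\ref{thm:curve-linking}. With it in place, the rest of your proposal goes through as written.
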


\begin{theorem}[Distinct secondary values over a fixed realized triple]\label{thm:intro-loss-family}
Let $C$ and $n$ be as above.  On $Y=C\times\mathbb P^2$, let $p:Y\to C$ and $h=c_1(\mathcal O_{\mathbb P^2}(1))$, pulled back to $Y$.  One can choose
\[
 a=p^*\alpha,\qquad b=nh,\qquad c_k=k\,p^*\gamma\quad(0\le k<n)
\]
so that all adjacent products vanish, all full motivic indeterminacies are zero, and
\[
 \langle a,b,c_k\rangle_{\CH}=\{k\eta\}\subset\CH^3(Y,1),
 \qquad\operatorname{ord}(\eta)=n.
\]
These $n$ values are distinct.  Nevertheless every triple has the same morphic realization $(0,n\rho h,0)$, and each corresponding morphic, Lawson, and singular bracket is $\{0\}$.  The target groups are nonzero:
\[
 L^3H^5(Y)\cong L_0H_1(Y)\cong\Z^{2g(C)},
 \qquad H^5(Y^{\mathrm{an}};\Z)\cong\Z^{2g(C)}.
\]
In particular, the secondary Chow invariant does not factor through the realized ordered triple together with its Lawson--Massey bracket.
\end{theorem}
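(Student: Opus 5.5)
We will deduce the statement from Theorem~\ref{thm:intro-curves} by pulling back along $p$ and multiplying by a power of $h$, using the projective bundle formula for higher Chow groups. Recall that
\[
 \CH^s(Y,j)\cong\bigoplus_{i=0}^{2}\CH^{s-i}(C,j)\,h^i,
\]
and the same decomposition holds for morphic and singular cohomology.

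\textbf{Step 1: vanishing and indeterminacy.} Since $n\alpha=0$, we have $ab=n\,p^*\alpha\cdot h=p^*(n\alpha)h=0$, and likewise $bc_k=0$. The indeterminacy is
\[
 I_M=a\,\CH^{2}(Y,1)+\CH^{2}(Y,1)\,c_k .
\]
By the projective bundle formula, $\CH^2(Y,1)=\CH^2(C,1)\oplus\CH^1(C,1)h\oplus\CH^0(C,1)h^2$. Products with $p^*\alpha\in\CH^1$ land in $\CH^3(C,1)\oplus\CH^2(C,1)h\oplus\CH^1(C,1)h^2$. Here $\CH^3(C,1)=0$ for dimension reasons, and $\CH^0(C,1)=0$. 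For the middle summand, $\alpha\cdot\CH^1(C,1)=\alpha\cdot\C^\times$ is a pushforward of $\C^\times$ from points. It is killed because $\alpha$ has degree $0$: the product factors through $\deg$, giving $\deg(\alpha)\cdot u=0$. So $I_M=0$.

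\textbf{Step 2: compute the bracket.} The defining system on $C$ for $\langle\alpha,n,\gamma\rangle$, namely $\Gamma_f,\Gamma_g$ with $\partial\Gamma_f=nD$, pulls back along $p$. Multiplying the middle entry by $h$ then gives a defining system for $\langle p^*\alpha,nh,p^*\gamma\rangle$. Using the Leibniz rule and the fact that $h$ is a cycle, its value is $p^*\theta\cdot h$. Set $\eta=p^*\theta\cdot h$. This lies in the summand $\CH^2(C,1)h$, so by the projective bundle formula $\operatorname{ord}\eta=\operatorname{ord}\theta=n$. Linearity in the third variable, on defining systems via $kD'$ and $g^k$, gives $\langle a,b,c_k\rangle=\{k\eta\}$, including $k=0$. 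These values are distinct since $\eta$ has order $n$.

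\textbf{Step 3: realizations.} $\rho(p^*\alpha)$ lies in $L^1H^2(Y)$, and $\rho$ on $\CH^1=\Pic$ factors through $\NS$, which is torsion-free. Hence $\rho a=0$ and likewise $\rho c_k=0$. A bracket with zero outer entries contains $0$, since we may choose zero null-homotopies. Its indeterminacy is $0\cdot(\ldots)+(\ldots)\cdot0=0$ in the full-indeterminacy sense, so each morphic bracket is $\{0\}$, and the Lawson and singular brackets follow by naturality. For the targets, Friedlander--Lawson duality with $d=3$, $p=3$, $m=5$ gives $L^3H^5\cong L_0H_1(Y)=H_1(Y^{\mathrm{an}})\cong\Z^{2g}$, because $L_0$ is singular homology by Dold--Thom. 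Poincar\'e duality gives $H^5(Y^{\mathrm{an}})\cong H_1\cong\Z^{2g}$. The final non-factorization is then immediate: the data $(0,n\rho h,0)$ and $\{0\}$ are the same for all $k$, while the motivic values differ.

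\textbf{Main obstacle.} The hardest point is Step 2. One must check that pulling back and multiplying by $h$ respects the cubical admissibility and the sign conventions. One must also check that the bracket with the middle entry $nh=n\cdot1\cdot h$ equals $\theta\cdot h$ up to the Leibniz sign. I would handle this by choosing a linear-subspace representative of $h$ meeting everything properly, and then arguing directly at the chain level.
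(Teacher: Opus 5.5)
Your overall route is the paper's: pull back the curve defining system along $p$, multiply by a hyperplane, use the projective bundle formula for the indeterminacy and the order, and kill the outer realized classes. The genuine gap is your justification in Step~1 for $\alpha\cdot\CH^1(C,1)=0$. For $\alpha=\sum_P m_PP$ and $u\in\C^\times$ one has $\alpha\cdot[u]=\sum_P m_P[P,u]\in\CH^2(C,1)$. The hypothesis $\deg\alpha=0$ only shows that the pushforward $(\pi_C)_*(\alpha\cdot[u])=u^{\deg\alpha}$ is trivial. It does not show that the class itself vanishes.

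In fact the product does \emph{not} factor through the degree when $g(C)\ge1$. The classes $\alpha\cdot[u]$ with $\alpha\in\Pic^0(C)$ generate the whole kernel of $(\pi_C)_*:\CH^2(C,1)\to\C^\times$ (the group often denoted $V(C)$), and this kernel is nonzero. For instance, suppose $C$ and a non-torsion $\alpha$ are defined over $\overline{\mathbb Q}$ and $u$ is transcendental. Over $\overline{\mathbb Q}(u)$ the class $\alpha\cdot[u]$ has residue $\pm\alpha\ne0$ along $u=0$. So it is non-torsion there, and it remains nonzero after base change to $\C$. As written, your argument would prove that this kernel is zero.

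The conclusion $I_M=0$ is nevertheless true, but for a reason you did not use: $n\alpha=0$, and $\CH^1(C,1)=\C^\times$ is divisible. Writing $u=v^n$, one gets $\alpha\cdot[u]=\alpha\cdot n[v]=(n\alpha)\cdot[v]=0$, and likewise $[u]\cdot k\gamma=0$. This is exactly the torsion--divisibility argument of the paper, in the proof of Theorem~\ref{thm:curve-linking}, reused in Theorem~\ref{thm:projective-transfer}.

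There is also a smaller point in Step~3. Naturality only shows that the singular bracket \emph{contains} $0$. To get the singleton, you must note that the singular outer classes $\mathrm{cl}\rho a$ and $\mathrm{cl}\rho c_k$ vanish, so the singular indeterminacy is zero; this is the same direct argument as in the morphic case, as the paper stresses. The Lawson bracket is the duality transport of the morphic one, so that part is fine.

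With these repairs the remaining steps agree with the paper. Your scaling via $kD'$ and $g^k$ also works, since $[P,u^k]$ and $k[P,u]$ are homologous; the paper instead simply scales $C$ and $V$ by $k$ in a fixed defining system.
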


The assertion for a fixed realized triple is proved in Theorem~\ref{thm:fixed-realized-triple}.  It compares values in \emph{one} higher Chow group with zero indeterminacy, not elements of unrelated quotients.

\begin{theorem}[Examples with ample canonical bundle]\label{thm:intro-ample}
For every $d\ge2$ and $n\ge2$ there is a smooth connected complex projective $d$-fold $Z$ with ample canonical line bundle $K_Z=\omega_Z=\det\Omega_Z^1$ and three divisor classes whose motivic Massey class modulo its full indeterminacy has exact order $n$.  A specified value $\eta_Z\in\CH^3(Z,1)$ has exact order $n$, while the entire realized morphic and singular brackets are $\{0\}$.  One may take $Z$ to be a product of $d$ curves of genus at least two.  If $d\ge3$, the morphic target $L^3H^5(Z)$ is nonzero and lies in the ordinary Lawson range under duality.
\end{theorem}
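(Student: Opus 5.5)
The plan is to take $Z=C_1\times C_2\times\cdots\times C_d$ with every $C_i$ a smooth projective curve of genus at least two, and to carry the curve computation of Theorem~\ref{thm:intro-curves} up to $Z$ by pullback and product with a hyperplane-type class. For ampleness, $K_Z=\boxtimes_i K_{C_i}$ is an exterior product of ample line bundles, hence ample. For the classes, let $\pi_1:Z\to C_1$ and $\pi_{2}:Z\to C_2$ be the projections, and let $\alpha,\gamma\in J(C_1)[n]$ be as in Theorem~\ref{thm:intro-curves}, so that $e_n(\alpha,\gamma)$ is a primitive $n$th root of unity. Set $a=\pi_1^*\alpha$ and $c=\pi_1^*\gamma$, and take $b=n\,\pi_2^*[x]$ with $x\in C_2$ a point. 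Then $b$ plays the role of $nh$ in Theorem~\ref{thm:intro-loss-family}, with $[x]$ replacing $h$ on $\mathbb P^2$. The adjacent products vanish because $n\alpha=0$ and $n\gamma=0$, so $ab=\pi_1^*(n\alpha)\cdot\pi_2^*[x]=0$, and likewise $bc=0$.

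Next I would write down the defining system. Pull back the cubical data $\Gamma_f,\Gamma_g$ from $C_1$ and multiply by $\pi_2^*[x]$. This gives the value
\[
\eta_Z=\pi_1^*\theta\cdot\pi_2^*[x]\in\CH^3(Z,1),
\]
where $\theta$ is the curve value of Theorem~\ref{thm:intro-curves}. By the projection formula, $n\eta_Z=0$. To see that the order is exactly $n$, I would construct a detection map
\[
\CH^3(Z,1)\to\CH^2(C_1,1)\xrightarrow{(\pi_{C_1})_*}\CH^1(\Spec\C,1)=\C^\times,
\]
given by capping with $\pi_2^*[\mathrm{pt}]$-dual classes: intersect with the fibre $\{y\}$ over $C_3\times\cdots\times C_d$ together with a suitable class on $C_2$, then push forward. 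Concretely, restrict to $C_1\times C_2\times\{z\}$ and push forward along $C_2$, which sends $\pi_2^*[x]$ to $1$. The value then maps to $e_n(\alpha,\gamma)^{-1}$, which has order $n$.

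The main obstacle is the full indeterminacy
\[
I=a\,\CH^{2}(Z,1)+\CH^{2}(Z,1)\,c,
\]
which on $Z$ need not vanish, so the claim that the class has exact order $n$ in the quotient requires more work. I would try first to show that the detection map kills $I$. Elements of $a\cdot\CH^2(Z,1)$ map to pairings of $\alpha$ against classes whose image in $\CH^2(C_1,1)$ is pushed forward, and the pushforward $\CH^2(C_1,1)\to\C^\times$ composed with multiplication by the $n$-torsion class $\alpha$ lands in $\mu_n$. I need these images to lie in a subgroup not containing $e_n(\alpha,\gamma)^{k}$ for $0<k<n$. To arrange this, I would instead use a homomorphism to $\C^\times/(\text{image of indeterminacy})$, or choose $\alpha,\gamma$ so that the relevant Weil pairings vanish on the indeterminacy. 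If this fails, the fallback is to replace $b$ by $n$ times an ample class. In either case, the expected computation is a Künneth-type decomposition of $\CH^2(Z,1)\otimes$ torsion, using that $\alpha\cdot(\text{decomposable }\C^\times\text{-classes})$ contributes $\alpha\otimes\C^\times$, which dies under $e_n$ pairing, up to divisible pieces.

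Finally, the realization statements follow as in Theorem~\ref{thm:intro-loss-family}. The classes $\rho a$ and $\rho c$ are torsion in $L^1H^2(Z)\cong\NS(Z)$, which is torsion-free for a product of curves, so $\rho a=\rho c=0$. Hence the realized triple is $(0,\rho b,0)$, and its morphic and singular brackets are the full indeterminacy cosets containing $0$, which must be shown to equal $\{0\}$. For nonvanishing of $L^3H^5(Z)$ when $d\ge3$, I would apply Friedlander--Lawson duality to get $L^3H^5(Z)\cong L_{d-3}H_{2d-5}(Z)$ with $d-3\ge0$ and $5\le6$. A Künneth computation then exhibits a free summand, for example through $H_1(C_1)$ tensored with fundamental classes of the remaining factors, in the range where the Lawson groups agree with singular homology.
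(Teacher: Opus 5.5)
Your setup matches the paper's. You use the same $Z$, the same triple $a=\pi_1^*\alpha$, $b=n\,\pi_2^*[x]$, $c=\pi_1^*\gamma$, and the factorwise defining system with value $\eta_Z=\pi_1^*\theta\cdot\pi_2^*[x]$. Your detector is also the paper's: by the projection formula for the fibre inclusion, it equals $T(v)=(\pi_1)_*(t\,v)$ followed by $(\pi_{C_1})_*$, where $t=h_3\cdots h_d$ and $h_i$ is the pulled-back point class of $C_i$.

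However, the central assertion is not proved: exact order $n$ \emph{modulo the full indeterminacy} $I=a\,\CH^2(Z,1)+\CH^2(Z,1)\,c$. You only note that the detector sends $I$ into $\mu_n$. That gives nothing, since every $e_n(\alpha,\gamma)^k$ also lies in $\mu_n$. The alternatives you then list are not arguments:
\begin{itemize}
\item quotienting $\C^\times$ by an uncomputed image;
\item special choices of $\alpha,\gamma$;
\item changing $b$;
\item a K\"unneth-type decomposition of $\CH^2(Z,1)$.
\end{itemize}
In particular, there is no K\"unneth formula for higher Chow groups of $Z$ to invoke. Moreover, $\CH^2(Z,1)$ can contain indecomposable classes that such a decomposition would have to control.

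The missing step needs no knowledge of $\CH^2(Z,1)$ at all. Apply the same recipe $T$ to $v\in\CH^2(Z,1)$. The projection formula gives $T(av)=\alpha\cdot T(v)$ and $T(vc)=T(v)\cdot\gamma$, where now $T(v)\in\CH^1(C_1,1)\cong\Gamma(C_1,\mathcal O_{C_1}^\times)=\C^\times$. You never factor the image of $av$ this way, and that is what hides the point. Since $\C^\times$ is divisible, write $T(v)=n\,w$; then $\alpha\cdot T(v)=(n\alpha)\,w=0$, and likewise $T(v)\cdot\gamma=0$. So $T$ annihilates $I$ exactly. Hence $k\eta_Z\in I$ forces $k\theta=0$, that is, $n\mid k$.

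Two smaller repairs are also needed.
\begin{itemize}
\item Realized brackets. Once $\rho a=\rho c=0$ (your torsion-free $\NS(Z)$ argument is fine), both summands of $I_{\mor}$ and $I_{\sing}$ are products with zero. So the realized brackets are exactly $\{0\}$, and nothing further needs to be shown.
\item Nonvanishing for $d\ge3$. Your class ``$H_1(C_1)$ tensored with fundamental classes of the remaining factors'' has degree $2d-1$, not $2d-5$. Also, $L_{d-3}H_{2d-5}(Z)$ does not agree with singular homology in general; Dold--Thom covers only $d=3$. Instead take $0\ne u\in L^1H^1(C_1)=H^1(C_1;\Z)$ and the morphic product $\pi_1^*u\cdot\rho(h_2)\rho(h_3)\in L^3H^5(Z)$. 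Its singular image is nonzero by K\"unneth, so $L^3H^5(Z)\ne0$ without computing the group.
\end{itemize}
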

This is proved by a Projection formula homomorphism in Theorem~\ref{thm:ample-products}; its indeterminacy is \emph{annihilated by this homomorphism}, not asserted to vanish.  For each fixed product of positive genus curves, varying the torsion order gives obstructions to integral formality and to formality over every $\Z_{(\ell)}$.  They do not imply rational nonformality or nonformality of morphic cohomology.

Two further consequences refine this comparison. Proposition~\ref{prop:coprime-transfer} proves that a generically finite pullback whose degree is prime to $n$ preserves the exact order of the secondary quotient class; Corollary~\ref{cor:cyclic-cover} applies this to smooth cyclic covers with ample canonical bundle. In a different direction, Proposition~\ref{prop:finite-zero} proves that, for every integer $m\ge2$, the reduced motivic brackets of the curve based constructions are the singleton $\{0\}$. The latter assertion concerns the entire derived-coefficient bracket, not only the reduction of one selected integral value.

\subsection{Conventions and organization}
Throughout, a smooth variety is assumed equidimensional, and all coefficient rings are commutative and unital. For a connected smooth variety $X$, $1_X\in\CH^0(X)$ denotes its fundamental class, which is the multiplicative unit. For an abelian group $G$, $G[n]=\{x:nx=0\}$ and $G/n=G/nG$.  A bar on a class denotes its image in the specified quotient; a bar on a homomorphism denotes its induced map on quotients.  The ring $\Z_{(\ell)}$ is obtained by inverting the integers prime to $\ell$.  All unmarked groups have integral coefficients, and unmarked tensor products are over $\Z$.  Semicolon coefficients mean derived coefficients, not a tensor product of already computed groups; Section~\ref{sec:coefficients} preserves Lawson's original homotopy-group definition and specifies this convention.  A bracket is nonzero when it does not contain zero.  We write $\Pic(X)$ for line bundles up to isomorphism, $\Pic^0(X)$ for the identity component, and $\NS(X)=\Pic(X)/\Pic^0(X)$ for a smooth projective variety.  The symbols $\rho$ and $\mathrm{cl}$ always refer to the two cohomological maps in \eqref{eq:realizations-intro}; $\Phi$ refers to the Lawson-to-singular-homology map.

Sections~2--3 specify the enhancement and operation.  Sections~4--5 give the curve calculation and an explicit elliptic example.  Section~6 proves the geometric applications, and Section~7 proves the separation from Lawson--Massey realization.  Section~8 distinguishes scalar decomposability from Massey indeterminacy and proves complete vanishing of the reduced brackets for every finite modulus.  Section~9 compares the result with \cite{HuLawson}; the final sections discuss formality and the scope of the conclusions.

\section{Multiplicative motivic models}
\subsection{The multiplicative enhancement}
Let
\[
 M_X=\bigoplus_{p\ge0}M_X^\bullet(p)
\]
be a weight-graded associative differential graded algebra representing the multiplicative motivic cohomology of $X$, so that
\begin{equation}\label{eq:dga-model}
 H^m(M_X(p))\cong H^m_M(X,\mathbb Z(p)),\qquad
 M_X^i(p)\otimes M_X^j(q)\longrightarrow M_X^{i+j}(p+q).
\end{equation}
The phrase ``representing the multiplicative motivic cohomology'' includes the multiplication and its coherent homotopies; an arbitrary differential graded algebra with isomorphic graded cohomology would not suffice.

We recall why an associative integral model is available.  Write $\mathrm{SH}(\mathbb C)$ for the stable motivic homotopy category over $\mathbb C$, and use its standard stable enhancement when forming derived mapping spectra.  A commutative motivic Eilenberg--Mac Lane spectrum $\MZ$ representing integral motivic cohomology is available over this base; see, for example, Spitzweck \cite{Spitzweck}.  Put
\[
 E_X(p)=\mathrm{Map}_{\mathrm{SH}(\mathbb C)}(\Sigma^\infty X_+,\Sigma^{0,p}\MZ)
\]
for the derived ordinary mapping spectrum.  Here $X_+$ is $X$ with a disjoint basepoint, $\Sigma^\infty X_+$ is its motivic suspension spectrum, and $\Sigma^{u,v}$ uses the grading normalized by $[\Sigma^\infty X_+,\Sigma^{m,p}\MZ]=H^m_M(X,\Z(p))$.  Thus $\pi_{-m}E_X(p)=H^m_M(X,\mathbb Z(p))$.  The diagonal of $X$ and multiplication on $\MZ$ give the weight-graded product.  The weight-zero coefficient mapping spectrum
\[
 K_{\mathrm{coeff}}:=\mathrm{Map}_{\mathrm{SH}(\mathbb C)}(\mathbf 1,\MZ)
\]
where $\mathbf1$ is the motivic sphere spectrum, has $\pi_0K_{\mathrm{coeff}}\cong\mathbb Z$ and vanishing other homotopy groups by weight-zero motivic cohomology; compare Mazza--Voevodsky--Weibel \cite[Corollary~4.2]{MVW}.  Hence, as a ring spectrum, it is equivalent to the classical Eilenberg--Mac Lane spectrum $\HZ$.  Consequently the $E_X(p)$ form a weight-graded $\HZ$-linear multiplicative object.  Shipley's Quillen equivalence between associative $\HZ$-algebra spectra and differential graded $\mathbb Z$-algebras supplies an associative model as in \eqref{eq:dga-model}; see \cite[Theorem~1.1]{Shipley2007}.  The weight grading is retained in the category of graded objects, whose tensor product in weight $p$ is $\bigoplus_{i+j=p}A(i)\otimes B(j)$. The monoidal comparison underlying \cite[Theorem~2.5]{Shipley2007} gives the corresponding comparison for associative algebras in these graded categories; the cross-weight products are retained. Naturality is taken in the fixed multiplicative theory, and only a chosen representative is used for a cochain calculation. Neither an arbitrary ring spectrum nor an arbitrary complex with the same cohomology is being substituted for this enhancement. No strictly commutative integral differential graded model is asserted.

\begin{remark}
For the explicit geometric calculations one can instead work in Bloch's cubical cycle complex after moving cycles into good position.  The spectrum-level discussion is used to make model-independence and realization naturality transparent; established comparison theorems are cited rather than reproved.
\end{remark}

\subsection{Cubical representatives and moving}
We use the cube $\square=\mathbb P^1\setminus\{1\}$, with faces obtained by setting coordinates equal to $0$ or $\infty$.  Let $z^p(X,\bullet)$ denote the cubical higher Chow cycle complex modulo degenerate cycles: in degree $j$ it is generated by admissible codimension-$p$ subvarieties of $X\times\square^j$, with cycles pulled back along coordinate projections factored out.  We impose no additional Moore-normalization condition on the graph representatives below. The integral comparison with the simplicial higher Chow complex is \cite[Theorem~4.7]{Levine1994}; the replacement of the usual affine cube by $\mathbb P^1\setminus\{1\}$ is a change of coordinate preserving the two distinguished faces.  The differential is
\[
 \partial=\sum_{i=1}^m(-1)^{i-1}(\partial_i^0-\partial_i^\infty)
 \quad\text{on }z^p(X,m).
\]
Admissible cycles meet every face properly.  Exterior products on separate variety factors are written $\boxt$ and are available directly; internal products require the appropriate proper-intersection condition.  Bloch's moving lemma \cite{Bloch1994} provides quasi-isomorphic good-position subcomplexes, and the relation with motivic cohomology is described in \cite[Lectures~17--19]{MVW}.  Every graph intersection used below is checked directly to be admissible. For a finite defining system in good position, products are computed in the corresponding moving-compatible cycle model. Exterior products are first formed on separate factors and then mapped to the product variety; no motivic K\"unneth quasi-isomorphism is assumed. Proper pushforward, pullback between smooth varieties, and the projection formula are used in their standard higher Chow sense \cite[Section~1.3]{Bloch1986}\cite[Lectures~17--19]{MVW}.

\subsection{Derived coefficients and Lawson homotopy groups}\label{sec:coefficients}
For motivic or morphic cohomology, coefficients in a commutative ring $R$ are obtained by derived scalar extension at the level of the multiplicative enhancement.  If $A_X$ is a weight-graded integral cochain model, set
\[
 A_{X,R}=A_X\otimes^{\mathbf L}_{\mathbb Z}R,
 \qquad H^m_A(X;R(p)):=H^m(A_{X,R}(p)).
\]
Equivalently, choose a cofibrant associative representative whose underlying complex is $K$-flat before tensoring.  Here $K$-flat means that tensoring with an acyclic complex preserves acyclicity.  An arbitrary replacement of complexes without its multiplicative structure would not justify a Massey calculation; see \cite{Shipley2007,Stacks}.

For Lawson homology, restrict here to a complex projective variety $X$ and retain the original homotopy-theoretic definition.  Let $\mathcal Z_p(X)$ be Lawson's topological abelian group of algebraic $p$-cycles.  Then
\begin{equation}\label{eq:lawson-def}
 L_pH_k(X)=\pi_{k-2p}(\mathcal Z_p(X),0),\qquad k\ge2p\ge0.
\end{equation}
Let $\mathrm{Sing}_\bullet\mathcal Z_p(X)$ be its singular simplicial \emph{abelian} group; no free abelianization of singular simplices is being taken.  By the Moore/Dold--Kan theorem, the homology of its normalized complex is naturally the homotopy of $\mathcal Z_p(X)$; see May \cite{MaySimplicial}.  Let $\mathbf L_p(X)$ be the associated connective $\HZ$-module spectrum, where ``connective'' means that its negative homotopy groups vanish.  Thus
\[
 \pi_j\mathbf L_p(X)\cong\pi_j\mathcal Z_p(X),\qquad j\ge0.
\]
The passage between chain complexes and $\HZ$-module spectra is a chain of Quillen equivalences; see Schwede--Shipley \cite[Theorem~5.1.6]{SchwedeShipley}.  We define
\begin{equation}\label{eq:lawson-coeff-spectrum}
 L_pH_k(X;R):=\pi_{k-2p}\bigl(\mathbf L_p(X)\dsmash_{\HZ}\HR\bigr),
\end{equation}
where $\HR$ is the Eilenberg--Mac Lane ring spectrum of $R$ and $\dsmash_{\HZ}$ denotes the derived relative smash product.  Formula \eqref{eq:lawson-coeff-spectrum} is a coefficient convention on the homotopy theory of the cycle space, not a redefinition of Lawson homology as ordinary singular homology.

For higher Chow groups we similarly set
\[
 \CH^p(X,n;R):=H_n\bigl(z^p(X,\bullet)\otimes^{\mathbf L}_{\mathbb Z}R\bigr).
\]
Since $z^p(X,\bullet)$ is bounded below and degreewise free abelian, ordinary tensoring of that complex computes the derived tensor product; this does \emph{not} imply that taking higher Chow homology commutes with tensoring.

\begin{proposition}[Universal coefficient sequences]\label{prop:uct}
For every commutative ring $R$ there are natural short exact sequences in the ranges specified above
\begin{align}
0&\longrightarrow L_pH_k(X)\otimes R\longrightarrow L_pH_k(X;R)
 \longrightarrow \Tor^{\mathbb Z}_1(L_pH_{k-1}(X),R)\longrightarrow0,\label{eq:uct-lawson}\\
0&\longrightarrow L^pH^m(X)\otimes R\longrightarrow L^pH^m(X;R)
 \longrightarrow \Tor^{\mathbb Z}_1(L^pH^{m+1}(X),R)\longrightarrow0,\label{eq:uct-morphic}\\
0&\longrightarrow \CH^p(X,n)\otimes R\longrightarrow \CH^p(X,n;R)
 \longrightarrow \Tor^{\mathbb Z}_1(\CH^p(X,n-1),R)\longrightarrow0.\label{eq:uct-chow}
\end{align}
We put $L_pH_j(X)=0$ for $j<2p$ in \eqref{eq:uct-lawson}, and $\CH^p(X,-1)=0$ in \eqref{eq:uct-chow}.  All tensor products and $\Tor$ groups here are over $\Z$; $\Tor_1$ is the first derived functor of tensor product.
\end{proposition}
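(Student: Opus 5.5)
The plan is to derive all three sequences from one algebraic statement, the Künneth/universal coefficient theorem over the principal ideal domain $\Z$. If $P$ is a chain complex of free abelian groups (more generally flat, or $K$-flat and bounded below), then for every abelian group $R$ there is a natural short exact sequence
\[
 0\to H_j(P)\otimes R\to H_j(P\otimes R)\to\Tor_1^{\Z}(H_{j-1}(P),R)\to0 .
\]
This holds because the subcomplexes of cycles and boundaries of $P$ are degreewise free, since subgroups of free abelian groups are free, and one then applies the long exact $\Tor$ sequence to $0\to Z\to P\to B[-1]\to0$ after tensoring with $R$. Only the underlying abelian group of $R$ matters, so the ring structure plays no role. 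I would state this once and then produce, in each case, a degreewise free complex whose homology is the given integral groups and whose derived tensor with $R$ computes the $R$-coefficient groups defined above.

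For higher Chow groups this is immediate. The complex $z^p(X,\bullet)$ is degreewise free, since the quotient of the free group on admissible cycles by the degenerate ones is still free on the nondegenerate generators. It is also bounded below with $z^p(X,j)=0$ for $j<0$. So $\CH^p(X,n;R)=H_n(z^p(X,\bullet)\otimes R)$, and \eqref{eq:uct-chow} follows, including the convention $\CH^p(X,-1)=0$.

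For Lawson homology I would use the normalized Moore complex $N\mathrm{Sing}_\bullet\mathcal Z_p(X)$. Its homology is $\pi_*\mathcal Z_p(X)$ by Dold--Kan, and under the Schwede--Shipley equivalence it corresponds to $\mathbf L_p(X)$. Moreover, $\mathbf L_p(X)\dsmash_{\HZ}\HR$ corresponds to the derived tensor $N\otimes^{\mathbf L}R$. Since $N$ need not be free (the simplicial groups $\mathrm{Sing}_n\mathcal Z_p$ are arbitrary abelian groups), I would replace it by a free resolution: a degreewise free connective complex $P\simeq N$, which exists for bounded-below complexes. The lemma then gives \eqref{eq:uct-lawson} with $j=k-2p$, where $\pi_{j-1}$ corresponds to $L_pH_{k-1}$. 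The edge case $k=2p$ uses connectivity, which is why $L_pH_{j}=0$ is set for $j<2p$.

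For morphic cohomology I would use the integral cochain model $A_X(p)$ from Section~\ref{sec:coefficients}, take a $K$-flat (cofibrant) replacement, and apply the cohomological form of the lemma. This form has $\Tor$ of $H^{m+1}$, because the differential raises degree. Naturality in all three cases follows from functoriality of the replacements up to homotopy, together with naturality of the lemma.

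I expect the main obstacle to be the morphic case. There the complex is unbounded, so $K$-flatness rather than degreewise freeness is what is available, and one must verify that the cycle/boundary argument still works. The approach I would take is to note that over $\Z$ a $K$-flat complex is quasi-isomorphic to a degreewise free $K$-projective complex, and that the splitting argument only needs $B$ and $Z$ to be flat. Over a PID, submodules of flat modules are flat, hence torsion-free, so the argument goes through.
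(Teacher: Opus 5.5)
Your argument is correct, but it runs the other way from the paper's. You make the \emph{complex} flat and then apply the classical cycles-and-boundaries K\"unneth argument. For Chow you use that $z^p(X,\bullet)$ is degreewise free; for Lawson you replace the Moore complex by a free resolution; for morphic cohomology you replace the model by a degreewise free $K$-projective complex.

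The paper instead resolves the \emph{coefficients}. Take a free resolution $0\to F_1\to F_0\to R\to0$ of the underlying group of $R$. Since $F_\bullet$ is a bounded complex of free groups, the two-column complex $P\otimes F_\bullet$ computes $P\otimes^{\mathbf L}R$ for an \emph{arbitrary} complex $P$. It is the mapping cone of $P\otimes F_1\to P\otimes F_0$, and $H_*(P\otimes F_i)\cong H_*(P)\otimes F_i$. The resulting long exact sequence therefore exhibits $H_j(P)\otimes R$ as a cokernel and $\Tor_1^{\Z}(H_{j-1}(P),R)$ as a kernel. Equivalently, the K\"unneth spectral sequence collapses because $\Z$ has global dimension one.

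The paper's route needs no replacement of the complex. So the unbounded morphic case, which you single out as the main obstacle, costs nothing, and naturality is automatic because no choices are made on the complex side. Your route gives the familiar explicit maps and is entirely elementary once flat models are in hand.

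Your Lawson resolution step is actually unnecessary. $\mathrm{Sing}_n\mathcal Z_p(X)$ consists of maps into the torsion-free group $\mathcal Z_p(X)$, so the normalized complex is already degreewise torsion-free, i.e.\ flat over $\Z$. Your flat version of the lemma then applies directly.
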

\begin{proof}
Apply the universal coefficient spectral sequence for the derived tensor product over $\mathbb Z$.  Since $\mathbb Z$ has global dimension one, only $\Tor_0$ and $\Tor_1$ occur. Equivalently, a length-one free resolution of the underlying abelian group of $R$ gives a two-column double complex; this also justifies the cohomological formula without an additional boundedness assumption.  In homological grading the adjacent term is one degree lower; in cohomological grading it is one degree higher.  See, for example, the Stacks Project \cite[Tag~061Y]{Stacks}.
\end{proof}

If $R$ is flat over $\mathbb Z$ (equivalently, its underlying abelian group is torsion-free), the Tor terms vanish.  Thus $\mathbb Z,\mathbb Q,\mathbb R,\mathbb C$, and $\mathbb Z_{(\ell)}$ give the expected tensor-product identifications.  For an integer $n\ge2$ and $R=\Z/n$, one obtains instead
\begin{align*}
0&\to L_pH_k(X)/n\to L_pH_k(X;\mathbb Z/n)
 \xrightarrow{\beta_L}L_pH_{k-1}(X)[n]\to0,\\
0&\to L^pH^m(X)/n\to L^pH^m(X;\mathbb Z/n)
 \xrightarrow{\beta_{\mor}}L^pH^{m+1}(X)[n]\to0.
\end{align*}
The connecting maps $\beta_L$ and $\beta_{\mor}$ are the corresponding Bockstein homomorphisms.  Hence quotienting an integral group by $n$ must not be confused with the full finite-coefficient theory.

\begin{remark}[A concrete Lawson example]
For a complex Enriques surface $S$, one has $H^1(S;\Z)=0$ and $\NS(S)[2]\cong\Z/2$; see \cite[Chapter~VIII]{BPV}.  Here $\NS(S)=\Pic(S)/\Pic^0(S)$ is the N\'eron--Severi group and $\Pic^0(S)$ is the identity component of the Picard scheme.  Friedlander--Lawson's weight-one calculation gives $L^1H^1(S)=0$ and $L^1H^2(S)=\NS(S)$; see  \cite{FLCocycles}.  Duality then gives $L_1H_3(S)=0$ and $L_1H_2(S)=\NS(S)$.  Consequently \eqref{eq:uct-lawson} yields
\[
 L_1H_3(S;\mathbb Z/2)\cong\mathbb Z/2,
 \qquad L_1H_3(S)\otimes\mathbb Z/2=0.
\]
This is exactly the kind of Tor contribution that is lost if finite coefficients are replaced by tensoring the already computed Lawson group.
\end{remark}

At higher Chow degree zero, the last term of \eqref{eq:uct-chow} vanishes for every $R$, so
\begin{equation}\label{eq:ordinary-chow-coeff}
 \CH^p(X,0;R)\cong\CH^p(X)\otimes R.
\end{equation}
Likewise $L_pH_{2p}(X;R)\cong L_pH_{2p}(X)\otimes R$. These lowest-degree facts do not extend to arbitrary higher degrees.

For an integral triple of ordinary Chow classes, put $s=p+q+r$, let $I_M\subset\CH^s(X,1)$ be \eqref{eq:intro-indet}, and let $I_{M,R}$ be the full indeterminacy in $M_X\otimes^{\mathbf L}_{\Z}R$. If $R$ is $\Z$-flat, the multiplication images defining these subgroups give canonical identifications
\begin{equation}\label{eq:flat-quotient}
 I_{M,R}=\im\bigl(I_M\otimes R\bigr),\qquad
 \bigl(\CH^s(X,1)/I_M\bigr)\otimes R
 \cong\CH^s(X,1;R)/I_{M,R}.
\end{equation}
Indeed, flatness commutes with cohomology, images, and quotients, and each coefficient class is a finite sum of tensors of integral classes. The transported defining system gives the corresponding quotient class. For nonflat $R$, the full coefficient groups, including their $\Tor$ contributions, must be used in both indeterminacy summands, and \eqref{eq:flat-quotient} is not asserted. No canonical splitting of the universal coefficient sequences is needed. All nonvanishing assertions below concern integral brackets or the specified flat localizations.

\section{The Chow--Massey construction and its indeterminacy}
The construction and its invariance are classical \cite{May1969}. We record the calculation proving Theorem~\ref{thm:intro-construction} in the present grading. Choose closed representatives
\[
 A\in M_X^{2p}(p),\qquad B\in M_X^{2q}(q),\qquad C\in M_X^{2r}(r)
\]
for $a,b,c$.  Since $ab=bc=0$, choose cochains
\[
 U\in M_X^{2(p+q)-1}(p+q),\qquad V\in M_X^{2(q+r)-1}(q+r)
\]
with
\begin{equation}\label{eq:nulls}
 dU=AB,\qquad dV=BC.
\end{equation}
Because $A,B,C$ have even cohomological degree,
\begin{equation}\label{eq:triple-cocycle}
 W=AV-UC,
 \qquad dW=A(BC)-(AB)C=0.
\end{equation}
Its weight is $s=p+q+r$ and its degree is $2s-1$, so
\[
 [W]\in H_M^{2s-1}(X,\mathbb Z(s))=\CH^s(X,1).
\]

\begin{definition}
The \emph{Chow--Massey product} $\langle a,b,c\rangle_{\CH}$ is the set of all cohomology classes represented by \eqref{eq:triple-cocycle} as the representatives and null-homotopies vary in the fixed multiplicative motivic enhancement.  Equivalently, it is the corresponding motivic Toda bracket expressed through \eqref{eq:motivic-higher Chow}.
\end{definition}

For fixed $A,B,C$, replacing $U,V$ by $U+u,V+v$ with $du=dv=0$ changes the value by $a[v]-[u]c$, giving precisely the subgroup \eqref{eq:intro-indet}.  Conversely every element of that subgroup is obtained this way.  Independence from cocycle representatives follows directly: if $A$ is changed to $A+dS$, replace $U$ by $U+SB$ and observe
\[
 dS\,V-SBC=d(SV),
\]
while if $C$ is changed to $C+dT$, replace $V$ by $V+BT$ and obtain
\[
 ABT-UdT=d(UT).
\]
If $B$ is changed to $B+dK$, replace $U,V$ by $U+AK,V+KC$; the two changes in $W$ cancel.  This proves the triple assertion of Theorem~\ref{thm:intro-construction}.

\begin{proposition}[Naturality and multiplicative invariance]\label{prop:naturality}
A weight-preserving multiplicative map of enhancements sends a defined triple bracket into the bracket of the image classes and sends the full source indeterminacy into the full target indeterminacy.  A multiplicative quasi-isomorphism identifies the triple brackets under the induced cohomology isomorphism.
\end{proposition}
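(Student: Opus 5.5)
Let $\phi:M_X\to M'_{X'}$ be a weight-preserving multiplicative map of enhancements, i.e.\ a map of weight-graded dg algebras commuting with differentials, and write $\phi_*$ for its map on cohomology. The plan is to push a defining system through $\phi$ and check the formulas directly.

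\textbf{Brackets.} Take a defining system $(A,B,C,U,V)$ for $\langle a,b,c\rangle_{\CH}$, with $dU=AB$ and $dV=BC$ as in \eqref{eq:nulls}. Since $\phi$ is multiplicative and commutes with $d$, we have $d\phi(U)=\phi(A)\phi(B)$ and $d\phi(V)=\phi(B)\phi(C)$. So $(\phi A,\phi B,\phi C,\phi U,\phi V)$ is a defining system for $\langle\phi_*a,\phi_*b,\phi_*c\rangle$; in particular, the image products vanish. The associated cocycle \eqref{eq:triple-cocycle} is
\[
 \phi(A)\phi(V)-\phi(U)\phi(C)=\phi(W).
\]
Hence $\phi_*[W]$ lies in the image bracket. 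Because weights and degrees are preserved, this lands in the correct group $\CH^s(X',1)$.

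\textbf{Indeterminacy.} By \eqref{eq:intro-indet}, a general element of the full source indeterminacy has the form $a x+y c$, with $x\in\CH^{q+r}(X,1)$ and $y\in\CH^{p+q}(X,1)$. Multiplicativity on cohomology gives
\[
 \phi_*(ax+yc)=\phi_*(a)\phi_*(x)+\phi_*(y)\phi_*(c),
\]
which lies in the target full indeterminacy $I_M(\phi_*a,\phi_*b,\phi_*c)$. Combining the two steps, $\phi_*$ maps the source coset into the target coset. It therefore induces the quotient map, and the quotient classes correspond.

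\textbf{Quasi-isomorphisms.} This is the step needing care: showing the inclusion of brackets is an equality. Suppose $\phi$ is a multiplicative quasi-isomorphism. By the first step, $\phi_*$ of the source bracket is a coset of $\phi_*(I_M)$ inside the target bracket, which is itself a coset of the target indeterminacy. Since $\phi_*$ is a ring isomorphism on cohomology, $\phi_*(I_M)$ equals the full target indeterminacy, because that indeterminacy is generated by products of image classes. Two cosets of the same subgroup that intersect are equal, so $\phi_*$ maps the source bracket bijectively onto the target bracket.

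\textbf{Zigzags.} For a multiplicatively equivalent model connected by a zigzag of such quasi-isomorphisms, as provided by the Shipley comparison, apply the previous step to each arrow and compose the resulting identifications. Note that the argument never lifts target defining systems back to the source; that lifting would be the genuine obstacle for higher $N$-fold brackets. For triple brackets it is avoided entirely by this coset-and-indeterminacy comparison.
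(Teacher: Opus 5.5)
Your proof is correct and follows the paper's argument. Like the paper, you apply the map to a defining system to get both inclusions, then for a quasi-isomorphism identify the two full indeterminacies and conclude that the image of the source bracket and the target bracket are nonempty cosets of the same subgroup, hence equal, and compose along zigzags.
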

\begin{proof}
Apply the map to a defining system; see \cite{May1969}.  For a quasi-isomorphism, the two full indeterminacies are identified.  The image of the source bracket and the target bracket are then nonempty cosets of the same subgroup.  Naturality gives an inclusion, hence equality.  This proves invariance along zigzags as well.  The spectrum-level formulation is the naturality of Toda brackets.
\end{proof}

Here and below the degree of a cochain is its cohomological degree, not its weight. For homogeneous motivic classes of degrees $d_1,d_2,d_3$, the sign convention is
\[
 W=AV+(-1)^{d_1+1}UC,\qquad dU=AB,\quad dV=BC.
\]
More generally, for $a_i\in\CH^{p_i}(X,j_i)$ put $d_i=2p_i-j_i$.  An entry spanning the classes indexed by $i,\ldots,k$ in May's defining system has weight $\sum_{t=i}^k p_t$ and cohomological degree $\sum_{t=i}^k d_t-(k-i)$.  Consequently
\[
 \langle a_1,\ldots,a_N\rangle_{\CH}
 \subset\CH^{\sum p_i}\left(X,\sum j_i+N-2\right)
\]
whenever a full defining system exists \cite{May1969}.  For $N\ge4$, vanishing adjacent products alone is insufficient.  We assert no analogue of the simple affine triple indeterminacy for all higher brackets.  All explicit examples below are triple products.

\section{Torsion divisors on a curve and the Weil pairing}\label{sec:curve}
Let $C$ be a smooth connected complex projective curve and $n\ge2$.  Choose degree-zero divisors
\[
 D=\sum_Pm_PP,\qquad D'=\sum_Qm'_QQ
\]
with disjoint supports and rational functions $f,g\in\mathbb C(C)^\times$ satisfying
\begin{equation}\label{eq:divisors}
 \operatorname{div}(f)=nD,\qquad \operatorname{div}(g)=nD'.
\end{equation}
Write $\alpha=[D]$ and $\gamma=[D']$ in $\CH^1(C)=\Pic(C)$.  Any pair of $n$-torsion classes in $\Pic^0(C)$ can be represented this way after moving the divisors to have disjoint support.

For a divisor $A=\sum a_PP$ whose support avoids the zeros and poles of a rational function $h$, write
\[
 h(A):=\prod_Ph(P)^{a_P}.
\]
Let $J(C)=\Pic^0(C)$ be the Jacobian.  Its canonical principal polarization gives the perfect Weil pairing
\[
 e_n:J(C)[n]\times J(C)[n]\longrightarrow\mu_n,
\]
where $\mu_n\subset\mathbb C^\times$ is the group of $n$th roots of unity.
With the convention \eqref{eq:divisors} and disjoint supports, we use
\begin{equation}\label{eq:weil-convention}
 e_n(\alpha,\gamma)=\frac{f(D')}{g(D)}.
\end{equation}
The equivalence between the polarization-theoretic Weil pairing and this divisor formula is classical; a convenient reference is Howe \cite{Howe1996}.

\begin{theorem}[Curve linking bracket]\label{thm:curve-linking}
The bracket
\[
 \langle\alpha,n,\gamma\rangle_{\CH}\subset\CH^2(C,1),
\]
where the middle class is $n\cdot1_C\in\CH^0(C)$, is a singleton $\{\theta\}$.  Write $\Gamma_f$ and $\Gamma_g$ for the graphs in $C\times\mathbb P^1$ restricted to $C\times\square$, viewed as elements of $z^1(C,1)$. We write $\theta=[\Theta]$ for a homology class of a cycle $\Theta\in z^2(C,1)$. A term $[P,u]$ means the closed point $(P,u)$ with $u\in\C^\times\setminus\{1\}$.  Then $\theta$ has the representative
\begin{equation}\label{eq:theta}
 \Theta=D\cdot\Gamma_g-\Gamma_f\cdot D'
 =\sum_Pm_P[P,g(P)]-\sum_Qm'_Q[Q,f(Q)].
\end{equation}
Terms with coordinate $1$ are omitted.  Moreover,
\begin{equation}\label{eq:push-weil}
 n\theta=0,
 \qquad (\pi_C)_*\theta=\frac{g(D)}{f(D')}=e_n(\alpha,\gamma)^{-1}
 \in\mathbb C^\times=\CH^1(\Spec\mathbb C,1),
\end{equation}
where $\pi_C:C\to\Spec\mathbb C$ is the structure morphism.  If $e_n(\alpha,\gamma)$ has exact order $n$, then $\theta$ has exact order $n$.
\end{theorem}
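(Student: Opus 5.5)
The plan is to compute the bracket directly in Bloch's cubical complex $z^\bullet(C,\bullet)$, using the construction of Section~3 with explicit null-homotopies.

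\emph{Step 1: the null-homotopies.} Take $A=D\in z^1(C,0)$, $B=n\cdot[C]\in z^0(C,0)$, and $C'=D'\in z^1(C,0)$ as representatives. Then $AB=nD$ and $BC'=nD'$. The graph $\Gamma_f\subset C\times\square$ satisfies $\partial\Gamma_f=\partial^0-\partial^\infty=\operatorname{div}(f)$ restricted to the faces, so $\partial\Gamma_f=nD$ (with the sign convention $\partial=\partial^0-\partial^\infty$ and the zero locus of $f$ giving $+nD$). Admissibility is clear because the faces of $\Gamma_f$ are finite sets of points. So we may take $U=\Gamma_f$ and $V=\Gamma_g$.

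\emph{Step 2: the representative.} By \eqref{eq:triple-cocycle}, $W=AV-UC'=D\cdot\Gamma_g-\Gamma_f\cdot D'$. The intersection $D\cdot\Gamma_g$ is proper because $\operatorname{supp}D$ avoids the zeros and poles of $g$ (the supports are disjoint). It equals $\sum m_P[P,g(P)]$, and the terms with $g(P)=1$ vanish in $C\times\square$. The second product is treated symmetrically, which gives \eqref{eq:theta}. Since the products are computed by intersecting directly in good position, they agree with the moving-compatible model.

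\emph{Step 3: the indeterminacy.} By Theorem~\ref{thm:intro-construction}, the indeterminacy is $\alpha\,\CH^1(C,1)+\CH^1(C,1)\,\gamma$. On a smooth projective curve, $\CH^1(C,1)=\mathcal O(C)^\times=\C^\times$, which is divisible. Hence $\alpha\cdot u=\alpha\cdot(v^n)=n\alpha\cdot v=0$, and similarly for the other summand. So the indeterminacy is zero and the bracket is a singleton.

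\emph{Step 4: $n\Theta$ is a boundary.} Consider $\Gamma_{(f,g)}\subset C\times\square^2$, the closure of $\{(x,f(x),g(x))\}$. Its faces are
\[
 \partial_1^{0}-\partial_1^\infty=\operatorname{div}(f)\times\{g\}=n\sum m_P[P,g(P)]
\]
and, in the second coordinate, $n\sum m'_Q[Q,f(Q)]$ with sign $(-1)$. Therefore $\partial\Gamma_{(f,g)}=n\Theta$. Admissibility again follows from the disjoint supports.

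\emph{Step 5: the pushforward.} Under $\pi_{C*}$, a point $[P,u]$ maps to $u\in\C^\times=\CH^1(\Spec\C,1)$, using Totaro/Nesterenko--Suslin. The group law becomes multiplication, so $(\pi_C)_*\theta=\prod g(P)^{m_P}/\prod f(Q)^{m'_Q}=g(D)/f(D')$, which equals $e_n^{-1}$ by \eqref{eq:weil-convention}. If $e_n$ has exact order $n$, then $\theta$ has order at least $n$, and combined with $n\theta=0$ it has exact order $n$.

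I expect the main obstacle to be the sign bookkeeping, especially matching the face orientation of $\Gamma_f$ with $U$ so that $dU=AB$ exactly, and checking that the two faces of $\Gamma_{(f,g)}$ carry the signs that produce $\Theta$ rather than its negative. If the signs come out reversed, I would replace $f$ by $1/f$ consistently; this changes only the orientation convention for $e_n$, and that convention is fixed by \eqref{eq:weil-convention}.
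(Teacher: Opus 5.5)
Your proposal is correct and follows essentially the same route as the paper: the graph chains $\Gamma_f,\Gamma_g$ serve as the null-homotopies, the good-position products give $\Theta$, the boundary of $\Gamma_{(f,g)}$ is $n\Theta$, divisibility of $\CH^1(C,1)=\C^\times$ kills both indeterminacy summands, and pushforward to $\C^\times$ detects the inverse Weil pairing. Your sign checks in Steps~1 and~4 are right, so the fallback of replacing $f$ by $1/f$ is unnecessary, and it would in any case conflict with the hypothesis $\operatorname{div}(f)=nD$.
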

\begin{proof}
The graph convention gives $\partial\Gamma_f=nD$ and $\partial\Gamma_g=nD'$. If $D$ is the zero divisor, one may take $f=1$, and similarly for $D'$; the graph of the constant $1$ is the zero chain in the open cube.  In the moving-compatible motivic enhancement, these are the two rational-equivalence null-homotopies.  The middle class $n\cdot1$ acts by scalar multiplication in the unital motivic model.  The only nonscalar products required are $D\cdot\Gamma_g$ and $\Gamma_f\cdot D'$, and they are in good position by the disjoint-support hypothesis.  Thus the standard cycle product computes this defining system \cite{Bloch1994,MVW}, and \eqref{eq:triple-cocycle} gives \eqref{eq:theta}.  No multiplication of arbitrary unmoved cycles, or rational alternating complex, is being used.  The disjoint-support condition makes all displayed intersections proper and ensures that the point coordinates avoid $0$ and $\infty$; a term with coordinate $1$ is absent from the open cube.

Let $\Gamma_{(f,g)}\in z^2(C,2)$ be the graph of $(f,g)$ in $C\times\square^2$.  Because the zero and pole divisors of $f$ and $g$ are disjoint, it meets all faces properly and no codimension-two face.  Its cubical boundary is
\begin{equation}\label{eq:twotheta}
 \partial\Gamma_{(f,g)}=nD\cdot\Gamma_g-\Gamma_f\cdot nD'=n\Theta.
\end{equation}
Thus $n\theta=0$ in the actual higher Chow group.

For a smooth projective connected curve, $\CH^1(C,1)=\Gamma(C,\mathcal O_C^\times)=\mathbb C^\times$; see \cite[Corollary~4.2]{MVW}.  This group is divisible. We write $[u]$ for the corresponding weight-one motivic class of a constant unit $u$, so $[uv]=[u]+[v]$. Hence, for $u\in\mathbb C^\times$ and $v^n=u$,
\[
 \alpha[u]=\alpha\,n[v]=(n\alpha)[v]=0,
 \qquad [u]\gamma=[v](n\gamma)=0.
\]
Both summands of the full indeterminacy therefore vanish, so the bracket is a singleton.

Proper pushforward sends $[P,u]$ to $[u]$, and the additive law in $\CH^1(\Spec\mathbb C,1)$ corresponds to multiplication in $\mathbb C^\times$.  Thus \eqref{eq:theta} gives the ratio in \eqref{eq:push-weil}.  Formula \eqref{eq:weil-convention} identifies it with the inverse Weil pairing.  If this root of unity has exact order $n$, then \eqref{eq:twotheta} and the pushforward homomorphism together imply that $\theta$ has exact order $n$.
\end{proof}

\begin{remark}[Relation with earlier work]
This calculation should be read together with the biextension literature.  Bloch's construction of Chow biextensions \cite{Bloch1989} and Gorchinskiy's comparison theorem already connect the relevant pairings to higher Chow complexes; Gorchinskiy further explains the associated Weil pairing through a Massey triple product and proves the corresponding $K$-cohomological formula \cite[Example~3.10, Section~4.1, and Corollary~4.36]{GorchinskiyBiext}\cite[Propositions~4.25 and~4.27]{Gorchinskiy}.  Thus Theorem~\ref{thm:curve-linking} is a concrete cubical specialization and full indeterminacy calculation.
\end{remark}

\begin{corollary}[All torsion orders on every positive genus curve]\label{cor:all-curves}
Let $C$ be a smooth connected complex projective curve of genus $g\ge1$.  For every $n\ge2$ there are classes $\alpha,\gamma\in\Pic^0(C)[n]$ such that
\[
 \langle\alpha,n,\gamma\rangle_{\CH}=\{\theta\}\subset\CH^2(C,1)
\]
is a singleton whose unique value has exact order $n$.
\end{corollary}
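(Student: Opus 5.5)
The plan is to reduce the corollary to Theorem~\ref{thm:curve-linking}, whose final sentence already gives the conclusion once we have $\alpha,\gamma\in\Pic^0(C)[n]$ with $e_n(\alpha,\gamma)$ of exact order $n$. Everything else in that theorem applies to arbitrary $n$-torsion classes. Those facts are that the bracket is defined, that it is a singleton because both indeterminacy summands vanish by divisibility of $\CH^1(C,1)=\C^\times$, and the pushforward formula~\eqref{eq:push-weil}. So the work is to choose $\alpha,\gamma$ and then represent them by divisors suitable for the cubical construction.

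\emph{First step (existence of the pair).} Since $g\ge1$, we have $J(C)[n]\cong(\Z/n)^{2g}$, and the Weil pairing $e_n$ of the principal polarization is perfect and alternating. Perfectness means the map $J(C)[n]\to\operatorname{Hom}(J(C)[n],\mu_n)$, $x\mapsto e_n(x,-)$, is an isomorphism. Choose $\alpha\in J(C)[n]$ of exact order $n$; it exists, for instance as a generator of one cyclic factor. The character $e_n(\alpha,-)$ then has exact order $n$ in the dual group. Since a character of a finite abelian group of exponent $n$ with order $n$ has image all of $\mu_n$, some $\gamma$ satisfies $e_n(\alpha,\gamma)=\zeta_n$, a primitive $n$th root of unity. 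More concretely, one may take a symplectic basis $x_1,\dots,x_g,y_1,\dots,y_g$ of $J(C)[n]$ with $e_n(x_i,y_j)=\zeta_n^{\delta_{ij}}$, and set $\alpha=x_1$, $\gamma=y_1$.

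\emph{Second step (divisor data).} Represent $\alpha$ and $\gamma$ by degree-zero divisors $D,D'$. By the moving statement already recorded in Section~\ref{sec:curve}, we may take $D$ and $D'$ with disjoint supports: fix $D$ and replace $D'$ by $D'+\operatorname{div}(h)$ for a rational function $h$ chosen, by Riemann--Roch, to avoid the finitely many points of $\operatorname{supp}D$. Choose $f,g$ with $\operatorname{div}(f)=nD$ and $\operatorname{div}(g)=nD'$. The hypotheses of Theorem~\ref{thm:curve-linking} now hold. It gives $\langle\alpha,n,\gamma\rangle_{\CH}=\{\theta\}$ and $(\pi_C)_*\theta=e_n(\alpha,\gamma)^{-1}=\zeta_n^{-1}$, which has exact order $n$. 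Because $n\theta=0$ by~\eqref{eq:twotheta} and pushforward is a homomorphism, $\theta$ has exact order $n$.

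The only substantive input is the perfectness of $e_n$ together with the identification~\eqref{eq:weil-convention} of the divisor formula with the polarization pairing. Both are classical and cited; the second comes from Howe. The point I would check most carefully is the claim that a perfect pairing yields a value of exact order $n$, not merely a nontrivial value. The symplectic-basis argument settles this cleanly, so I expect no real obstacle.
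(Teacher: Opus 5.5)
Your proposal is correct and follows the paper's proof essentially step for step: choose $\alpha$ of exact order $n$, use perfectness of $e_n$ to obtain $\gamma$ with $e_n(\alpha,\gamma)$ a primitive $n$th root of unity, move the divisors to disjoint supports, and apply Theorem~\ref{thm:curve-linking}. Your argument that a character of order $n$ into $\mu_n$ is surjective (or alternatively the symplectic basis) spells out a step that the paper states in a single line.
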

\begin{proof}
The Weil pairing on $J(C)[n]\cong(\Z/n)^{2g}$ is perfect \cite{Howe1996}. Since $g\ge1$, choose an element $\alpha$ of exact order $n$; the associated character $e_n(\alpha,-)$ has exact order $n$, so some $\gamma$ satisfies that $e_n(\alpha,\gamma)$ is a primitive $n$th root of unity.  Move divisor representatives to disjoint support and apply Theorem~\ref{thm:curve-linking}.
\end{proof}

\section{An explicit elliptic calculation}
Let $E$ be the smooth projective model of
\[
 E:y^2=x^3-x
\]
with point at infinity $O$, and write $\pi_E:E\to\Spec\C$ for its structure morphism.  Put
\[
 P=(0,0),\qquad Q=(1,0),\qquad T=(2,\sqrt6).
\]
The points $P,Q$ are nonzero $2$-torsion points.  The chord through $T$ and $Q$ has slope $\sqrt6$, so the group law gives
\[
 T'=T+Q=(3,-2\sqrt6).
\]
Define
\begin{equation}\label{eq:explicit-divisors}
 D=P-O,\qquad D'=T'-T,\qquad f=x,\qquad g(S)=x(S-T)-1.
\end{equation}
Then
\[
 \operatorname{div}(f)=2P-2O,
 \qquad \operatorname{div}(g)=2T'-2T.
\]
The four support points are distinct.  A direct addition calculation gives
\[
 x(P-T)=-\frac12,
 \qquad g(P)=-\frac32,
 \qquad g(O)=1,
 \qquad f(T')=3,
 \qquad f(T)=2.
\]

\begin{proposition}[A three-point higher cycle]\label{prop:explicit}
For $\alpha=[P-O]$ and $\gamma=[T'-T]$, put
\[
 \Theta_E=[P,-3/2]-[T',3]+[T,2]\in z^2(E,1),
 \qquad \theta_E=[\Theta_E]\in\CH^2(E,1).
\]
Then $\langle\alpha,2\cdot1_E,\gamma\rangle_{\CH}=\{\theta_E\}$. Its full indeterminacy is zero and $\theta_E$ has exact order two.
\end{proposition}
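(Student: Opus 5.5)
The plan is to deduce the statement directly from Theorem~\ref{thm:curve-linking} with $C=E$ and $n=2$. That theorem already gives the singleton property, the representative \eqref{eq:theta}, the relation $2\theta_E=0$, and the pushforward formula \eqref{eq:push-weil}. So it suffices to check its hypotheses for the data \eqref{eq:explicit-divisors}, evaluate \eqref{eq:theta}, and show that the resulting root of unity is $-1$.

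\textbf{Hypotheses.} I first check that $T=(2,\sqrt6)$ and $T'=(3,-2\sqrt6)$ lie on $E$: $8-2=6$ and $27-3=24$. I then recompute $T+Q$ from the chord of slope $\sqrt6$ through $Q=(1,0)$, which gives $x_3=6-1-2=3$ and $y=-2\sqrt6$.
\begin{itemize}
\item Since $P$ is $2$-torsion and $x$ is a degree-two function with a double pole at $O$ and a double zero at $P$, we get $\operatorname{div}(x)=2P-2O$.
\item The function $x-1$ has divisor $2Q-2O$. Pulling back along the translation $S\mapsto S-T$ gives $\operatorname{div}(g)=2(T+Q)-2T=2T'-2T$.
\item The supports $\{P,O\}$ and $\{T,T'\}$ are disjoint, since $T,T'$ have nonzero $y$-coordinate. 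Both $D$ and $D'$ have degree zero, and they represent classes in $J(E)[2]$ because $\operatorname{div}(f)=2D$ and $\operatorname{div}(g)=2D'$.
\end{itemize}

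\textbf{Evaluating \eqref{eq:theta}.} Next I evaluate the point coordinates.
\begin{itemize}
\item For $g(P)$: $P-T=P+(-T)$ with $-T=(2,-\sqrt6)$. The line through $(0,0)$ and $(2,-\sqrt6)$ has slope $-\sqrt6/2$, so $x(P-T)=\tfrac64-0-2=-\tfrac12$. Hence $g(P)=-\tfrac32$.
\item For $g(O)$: $x(O-T)=x(-T)=2$, so $g(O)=1$.
\item Clearly $f(T')=3$ and $f(T)=2$.
\end{itemize}
Substituting into \eqref{eq:theta}, $D\cdot\Gamma_g=[P,-3/2]-[O,1]$. The term $[O,1]$ has cube coordinate $1$ and is omitted, so $D\cdot\Gamma_g=[P,-3/2]$. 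Also $\Gamma_f\cdot D'=[T',3]-[T,2]$. Hence $\Theta=\Theta_E$, and all coordinates avoid $0,1,\infty$. Theorem~\ref{thm:curve-linking} then gives $\langle\alpha,2\cdot1_E,\gamma\rangle_{\CH}=\{\theta_E\}$ with zero full indeterminacy and $2\theta_E=0$.

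\textbf{Exact order.} I compute the pushforward:
\[
(\pi_E)_*\theta_E=\frac{g(D)}{f(D')}=\frac{g(P)/g(O)}{f(T')/f(T)}=\frac{-3/2}{3/2}=-1\ne1 .
\]
So $\theta_E\neq0$, and together with $2\theta_E=0$ it has exact order two. Equivalently, $e_2(\alpha,\gamma)=-1$.

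\textbf{Main obstacle.} The delicate step is the bookkeeping for $\Gamma_g$ at the point $O$. One must confirm that $g(O)=1$, so that this term genuinely drops out of the open cube rather than contributing a boundary term. One must also confirm that the sign and orientation conventions of \eqref{eq:theta} match the stated three-point cycle. I would double-check the computation of $g(O)$ through the translation formula, and verify the $x(P-T)$ computation by the addition law a second time.
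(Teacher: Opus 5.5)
Your proposal is correct and follows the paper's route. You specialize Theorem~\ref{thm:curve-linking} to $C=E$ and $n=2$, substitute the explicit values into \eqref{eq:theta}, and detect nonvanishing through $(\pi_E)_*\theta_E=-1$, with $2\theta_E=0$ and zero indeterminacy supplied by that theorem; your extra checks (translating $x-1$ to get $\operatorname{div}(g)$, recomputing $x(P-T)=-\tfrac12$) just make explicit the computations the paper states before the proposition.
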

\begin{proof}
Substitution in \eqref{eq:theta} gives $\Theta_E$; the term $[O,1]$ is absent from $\square$. Proper pushforward gives
\[
 (\pi_E)_*\theta_E=(-3/2)\,3^{-1}\,2=-1\ne1
 \quad\text{in }\C^\times.
\]
Thus $\theta_E\ne0$. Equation~\eqref{eq:twotheta} gives $2\theta_E=0$, and Theorem~\ref{thm:curve-linking} gives zero full indeterminacy.
\end{proof}

\section{Geometric applications}\label{sec:applications}
\subsection{Projective products}
Let $C,D,D',n,\theta$ be as in Theorem~\ref{thm:curve-linking}, with $\theta$ of exact order $n$.  For $r\ge1$ set
\[
 X_r=C\times\mathbb P^r,
 \qquad p:X_r\to C,
 \qquad h=c_1(\mathcal O_{\mathbb P^r}(1))\in\CH^1(X_r),
\]
where pullbacks of $h$ are implicit.

\begin{theorem}[Three divisor classes]\label{thm:projective-transfer}
Set
\[
 a=p^*\alpha,\qquad b=nh,\qquad c=p^*\gamma\in\CH^1(X_r).
\]
Then $ab=bc=0$ and
\[
 \langle a,b,c\rangle_{\CH}=\{\eta_r\},
 \qquad \eta_r=h\,p^*\theta\in\CH^3(X_r,1).
\]
The full indeterminacy is zero and $\eta_r$ has exact order $n$.
\end{theorem}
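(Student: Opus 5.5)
The plan is to transport the curve defining system of Theorem~\ref{thm:curve-linking} to $X_r$ by pullback along $p$ and then multiply by $h$. The vanishing products come first. Since $n\alpha=0$, we get $ab=n\,h\,p^*\alpha=h\,p^*(n\alpha)=0$, and in the same way $bc=0$. For the null-homotopies we take $U=H\cdot p^*\Gamma_f$ and $V=H\cdot p^*\Gamma_g$, where $H$ is a hyperplane cycle $\C\times L$ for a line $L\subset\mathbb P^r$. Pulled-back graph cycles meet $C\times L$ properly, so these products are admissible. We have $dU=H\cdot p^*(nD)=ab$ up to the sign and ordering conventions of \eqref{eq:nulls}. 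The key point is that $nH$ represents $b$ and commutes with pulled-back cycles, since all of them are exterior-type products $\boxt$ on the separate factors $C$ and $\mathbb P^r$. The triple cocycle is then
\[
W=p^*D\cdot H\cdot p^*\Gamma_g-H\cdot p^*\Gamma_f\cdot p^*D'=H\cdot p^*\Theta ,
\]
so $W$ represents $h\,p^*\theta=\eta_r$. Because the weights add, $\eta_r$ lies in $\CH^3(X_r,1)$.

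Next comes the full indeterminacy $a\,\CH^2(X_r,1)+\CH^2(X_r,1)\,c$. For this I would use the projective bundle formula for higher Chow groups:
\[
\CH^2(X_r,1)\cong\CH^2(C,1)\oplus h\,\CH^1(C,1)\oplus h^2\CH^0(C,1).
\]
Here $\CH^0(C,1)=0$ and $\CH^1(C,1)=\C^\times$. Multiplication by $a=p^*\alpha$ kills the summand $h\,p^*\CH^1(C,1)$, because $\C^\times$ is divisible and $n\alpha=0$; this is exactly the argument in Theorem~\ref{thm:curve-linking}. On the summand $p^*\CH^2(C,1)$, the product $\alpha\cdot\CH^2(C,1)$ lands in $\CH^3(C,1)$. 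That group is zero for a curve, since its cycles have codimension $3$ in $C\times\square$, which has dimension $2$. The same argument applies to right multiplication by $c$. Hence the indeterminacy is zero and the bracket is the singleton $\{\eta_r\}$.

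For the exact order, \eqref{eq:twotheta} gives $n\eta_r=h\,p^*(n\theta)=0$. To see that $n'\eta_r\ne0$ for every proper divisor $n'$ of $n$, I would use the projection formula. Choose a class $\ell\in\CH^{r-1}(\mathbb P^r)$ with $h\cdot\ell$ equal to the class of a point; for example $\ell=h^{r-1}$. Then
\[
p_*\bigl(\eta_r\cdot\ell\bigr)=p_*\bigl(h^r\,p^*\theta\bigr)=\theta .
\]
Alternatively, one can use the projective bundle decomposition above, in which $\eta_r$ is the coordinate $\theta$ in the $h$-summand. Since $\theta$ has exact order $n$, so does $\eta_r$.

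The main obstacle is justifying that the cycle-level products compute the defining system in the multiplicative enhancement. This covers the good-position requirement and the commutation of $H$ past the pulled-back cycles, and it is handled by the moving-compatible model and the exterior product $\boxt$. A second, smaller obstacle is citing the projective bundle formula for higher Chow groups integrally, since the indeterminacy computation depends on it.
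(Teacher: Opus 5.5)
Your proposal is correct and follows essentially the same route as the paper. It uses the defining system $\Gamma_f\boxt H$, $\Gamma_g\boxt H$ with value $\Theta\boxt H$, kills the full indeterminacy via the projective bundle formula with $\CH^3(C,1)=0$ and divisibility of $\C^\times$, and gets the exact order from $p_*(h^{r-1}\eta_r)=\theta$. There are two small slips. $H$ should be $C\times L$ with $L$ a hyperplane, which is a line only when $r=2$. Your alternative order argument needs the projective bundle decomposition of $\CH^3(X_r,1)$, not the one of $\CH^2(X_r,1)$ you wrote down.
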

\begin{proof}
The adjacent products vanish because $n\alpha=n\gamma=0$.  If $H\subset\mathbb P^r$ is a hyperplane, the chains $\Gamma_f\boxt H$ and $\Gamma_g\boxt H$ have the required boundaries, and their defining-system value is $\Theta\boxt H$.  All intersections on the curve are those already checked in Theorem~\ref{thm:curve-linking}; the projective-space factor is external, so no motivic K\"unneth isomorphism is being assumed.

The projective bundle formula gives
\begin{equation}\label{eq:pb-formula}
 \CH^2(X_r,1)
 \cong p^*\CH^2(C,1)\oplus h\,p^*\CH^1(C,1),
\end{equation}
with further possible summands zero because $\CH^0(C,1)=0$ and negative codimension groups vanish; see \cite[Theorem~15.12]{MVW}.  Multiplication by $a$ or $c$ kills the first summand because $\CH^3(C,1)=0$ (there are no codimension-three cycles in $C\times\square$), and it kills the second by the torsion/divisibility calculation in Theorem~\ref{thm:curve-linking}.  Thus the full indeterminacy
\[
 a\,\CH^2(X_r,1)+\CH^2(X_r,1)c
\]
is zero.

Finally the projective bundle pushforward and projection formula yield
\[
 p_*(h^{r-1}\eta_r)=p_*(h^rp^*\theta)=\theta.
\]
Hence $\eta_r$ has order at least $n$, while $n\eta_r=0$ follows from $n\theta=0$.
\end{proof}

The surface $C\times\mathbb P^1$ already has a nonzero bracket in $\CH^3(-,1)$ with three ordinary divisor classes.  For comparison with ordinary nonnegative-index Lawson homology, the threefold $C\times\mathbb P^2$ is more convenient.

\begin{corollary}[A nonzero realization-kernel class in every torsion order]\label{cor:kernel-family}
Let $C$ have genus $g\ge1$ and let $n\ge2$.  On $Y=C\times\mathbb P^2$ there is a triple of divisor classes whose Chow--Massey quotient is a nonzero element of exact order $n$, while its image in the morphic Massey quotient is zero.  The relevant morphic target is $L^3H^5(Y)\cong\mathbb Z^{2g}$.
\end{corollary}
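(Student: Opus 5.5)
Take $r=2$ in Theorem~\ref{thm:projective-transfer}. With $a=p^*\alpha$, $b=nh$ and $c=p^*\gamma$ on $Y=C\times\mathbb P^2$, that theorem gives a singleton bracket $\{\eta_2\}$ with zero full indeterminacy and $\operatorname{ord}(\eta_2)=n$. Since the motivic indeterminacy is zero, the Chow--Massey quotient class is $\eta_2$ itself, so it is nonzero of exact order $n$. It remains to show that its image under $\bar\rho$ is zero and to compute the morphic target.

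For the vanishing, I would first use that $\rho$ is multiplicative and natural, so $\bar\rho$ sends $\eta_2$ to the class of $\rho(\eta_2)$ modulo the morphic indeterminacy, by Proposition~\ref{prop:naturality}. The cleanest route is to show that $\rho(\eta_2)=0$ already in $L^3H^5(Y)$. Write $\eta_2=h\,p^*\theta$. Multiplicativity and naturality of $\rho$ give $\rho(\eta_2)=\rho(h)\,p^*\rho(\theta)$, where $\rho(\theta)\in L^2H^3(C)$. For a curve, $L^2H^3(C)$ is identified by duality with $L_{-1}H_{-1}$, which lies outside the Lawson range $k\ge 2p$. I would therefore argue instead that $L^2H^3(C)\cong H^3(C^{\mathrm{an}};\Z)=0$: morphic cohomology in weight exceeding the dimension agrees with singular cohomology, by the Friedlander--Lawson comparison in that range. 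Thus $\rho(\theta)=0$, and hence $\rho(\eta_2)=0$.

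An alternative route, which avoids this comparison, is to note that $\theta$ is $n$-torsion while $L^3H^5(Y)\cong\Z^{2g}$ is torsion-free. With the same projective bundle formula in morphic cohomology, $h\cdot p^*\rho(\theta)$ lands in a torsion-free group and is torsion, so it is zero. Either way, the image of $\mathfrak m_{\CH,Y}(a,b,c)$ in $Q_{\mor}$ is the class of $0$, which is zero.

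To compute the target, I would use Friedlander--Lawson duality for the threefold $Y$ with $p=3$, $m=5$ (allowed since $m\le 2p$), which gives $L^3H^5(Y)\cong L_0H_1(Y)$. Zero-cycle Lawson homology is singular homology by Dold--Thom, so $L_0H_1(Y)\cong H_1(Y^{\mathrm{an}};\Z)\cong H_1(C;\Z)\cong\Z^{2g}$, since $\mathbb P^2$ is simply connected.

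The main obstacle is justifying the morphic projective bundle decomposition, or equivalently the vanishing $L^2H^3(C)=0$, in the integral setting. I would try the torsion-freeness argument first, since it requires only the identification $L^3H^5(Y)\cong\Z^{2g}$ from duality and Dold--Thom, together with $n\eta_2=0$.
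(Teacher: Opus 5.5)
Your proof is correct, but the vanishing step takes a different route from the paper's.

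The paper invokes Theorem~\ref{thm:loss}. The outer classes $p^*\alpha,p^*\gamma$ lie in $\Pic^0(Y)$, so they die under $\Pic(Y)\to L^1H^2(Y)=\NS(Y)$ by \eqref{eq:diag-morphic}. The realized triple is therefore $(0,n\rho h,0)$, whose morphic indeterminacy is zero and whose bracket is exactly $\{0\}$, so $\bar\rho$ kills the class.

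You instead use that $\rho$ is a homomorphism, so $n\rho(\eta_2)=\rho(n\eta_2)=0$, and that $L^3H^5(Y)\cong L_0H_1(Y)\cong\Z^{2g}$ is torsion-free. This is valid as it stands. No morphic projective bundle formula is needed, so the ``main obstacle'' you mention does not arise. Your first route relies on $L^2H^3(C)\cong H^3(C^{\mathrm{an}};\Z)$ outside the duality range; that is an external input the paper never uses, and you can simply drop it.

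The two approaches buy different things:
\begin{itemize}
\item In your version the target computation does double duty. You also get, more generally, that the entire torsion subgroup of $\CH^3(Y,1)$ lies in $\ker\rho$, independently of the triple.
\item The paper's version needs nothing about the target; the computation $L^3H^5(Y)\cong\Z^{2g}$ only shows the vanishing is not caused by a zero group. It yields the stronger statement that the whole realized morphic and singular brackets are $\{0\}$. It also applies verbatim to Theorem~\ref{thm:ample-products}, where torsion-freeness of $L^3H^5(Z)$ is not established.
\end{itemize}

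Finally, say explicitly that $\alpha,\gamma$ come from Corollary~\ref{cor:all-curves}. That is where $g\ge1$ is used to make $\theta$, and hence $\eta_2$, of exact order $n$.
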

\begin{proof}
Choose $\alpha,\gamma$ by Corollary~\ref{cor:all-curves}, apply Theorem~\ref{thm:projective-transfer} with $r=2$, and then apply Theorem~\ref{thm:loss} below.  The target computation is given after that theorem.
\end{proof}

\begin{corollary}[Every dimension in the Lawson range]\label{cor:all-dimensions}
For every $d\ge3$ and $n\ge2$, the $d$-fold $Y_d=C\times\mathbb P^{d-1}$, with $g(C)\ge1$, has a bracket of three divisor classes whose motivic indeterminacy is zero and whose value has exact order $n$.  Its morphic and singular brackets are $\{0\}$.  The morphic target $L^3H^5(Y_d)$ is nonzero and is identified by duality with $L_{d-3}H_{2d-5}(Y_d)$.
\end{corollary}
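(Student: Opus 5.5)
The plan is to run Theorem~\ref{thm:projective-transfer} with $r=d-1$ and then reuse the realization argument of Corollary~\ref{cor:kernel-family}. First choose $\alpha,\gamma\in J(C)[n]$ by Corollary~\ref{cor:all-curves}, so that $e_n(\alpha,\gamma)$ is primitive. Theorem~\ref{thm:projective-transfer} with $r=d-1\ge2$ then gives, on $Y_d=C\times\mathbb P^{d-1}$, the divisor triple $a=p^*\alpha$, $b=nh$, $c=p^*\gamma$. For this triple $ab=bc=0$, the full indeterminacy is zero, and the bracket is the singleton $\{\eta_{d-1}\}$ with $\eta_{d-1}=h\,p^*\theta$ of exact order $n$. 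Exactness of the order comes from $p_*(h^{d-2}\eta_{d-1})=\theta$. Nothing new is needed here.

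For the realization statement, I would apply the multiplicative realization maps \eqref{eq:realizations-intro} via Proposition~\ref{prop:naturality}. The realized triple is $(\rho a,n\rho h,\rho c)$. The classes $\alpha,\gamma$ are algebraically equivalent to zero, since they lie in $\Pic^0(C)$. The weight-one morphic group $L^1H^2$ equals $\NS$ by Friedlander--Lawson, so $\rho a=\rho c=0$. A bracket $\langle 0,x,0\rangle$ contains $0$, because one may take zero null-homotopies. Hence the morphic bracket equals its full indeterminacy
\[
 0\cdot L^2H^3(Y_d)+L^2H^3(Y_d)\cdot 0=0 .
\]
So the morphic bracket is exactly $\{0\}$, and the singular bracket is also $\{0\}$, either by the same argument or as the image under $\mathrm{cl}$.

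The target computation is the one step with real content. Friedlander--Lawson duality on the $d$-fold $Y_d$, with $p=3\le d$ and $m=5\le 6$, gives $L^3H^5(Y_d)\cong L_{d-3}H_{2d-5}(Y_d)$. To compute it I would use the projective bundle formula for Lawson homology, which gives
\[
 L_{d-3}H_{2d-5}(C\times\mathbb P^{d-1})\cong\bigoplus_{j}L_{d-3-j}H_{2d-5-2j}(C),
\]
with only the cycle dimensions $0$ and $1$ surviving on the curve. The summand $j=d-3$ gives $L_0H_1(C)=H_1(C;\Z)\cong\Z^{2g}$ by Dold--Thom. A summand $L_1H_k(C)$ would need $k=3$ with $j=d-4$, but $L_1H_3(C)=\pi_1\mathcal Z_1(C)=\pi_1(\Z)=0$. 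Hence $L^3H^5(Y_d)\cong\Z^{2g}\neq0$. The main obstacle is citing the projective bundle theorem for Lawson homology in exactly this indexing, and checking that Lawson range condition $k\ge2p$ holds for the surviving summand. The condition reads $2d-5\ge2(d-3)$, which is fine.
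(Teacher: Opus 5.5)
Your proof is correct. Its first two steps match the paper's proof exactly: Theorem~\ref{thm:projective-transfer} with $r=d-1$, then killing the outer classes via $\Pic\to\NS$, so that the realized bracket is $\langle0,n\rho h,0\rangle$ with zero indeterminacy. One small caution about the singular bracket: of your two justifications, only ``the same argument'' works. Naturality under $\mathrm{cl}$ gives only the inclusion $0\in\langle0,n\,h_{\sing},0\rangle_{\sing}$. You still need $I_{\sing}=0$, which follows from the vanishing of the outer singular classes; the paper stresses that naturality gives inclusions of brackets, not equalities.

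The genuine difference is in the target step.
\begin{itemize}
\item The paper never computes $L^3H^5(Y_d)$. It takes $0\ne u\in L^1H^1(C)=H^1(C^{\mathrm{an}};\Z)$ and exhibits the class $u\boxtimes\rho(h)^2$. This class is nonzero because its singular realization $\mathrm{cl}(u)\boxtimes h_{\sing}^2$ is nonzero by the integral K\"unneth theorem.
\item You instead compute the whole group from the projective bundle theorem for Lawson homology. This gives $L^3H^5(Y_d)\cong\Z^{2g}$ for every $d\ge3$, extending the paper's $d=3$ computation. You should say explicitly that the summands with negative cycle index, $j=d-2,d-1$, are $H_{-1}(C)$ and $H_{-3}(C)$ under the usual convention, so they also vanish; your phrase ``only cycle dimensions $0$ and $1$ survive'' skips them.
\end{itemize}

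Your route yields the exact group, but it needs an extra cited theorem (Friedlander--Gabber's projective bundle formula, with its negative-index conventions). The paper's route uses only multiplicativity of $\rho$ and $\mathrm{cl}$ together with singular K\"unneth. It also shows that the nonzero class is detected in $H^5(Y_d^{\mathrm{an}};\Z)$. Finally, the same argument is reused for the products of curves in Theorem~\ref{thm:ample-products} and the cyclic covers of Corollary~\ref{cor:cyclic-cover}, where no projective bundle structure is available.
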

\begin{proof}
Take $r=d-1$ in Theorem~\ref{thm:projective-transfer}, whose projective-bundle calculation already proves vanishing of the \emph{entire} motivic indeterminacy.  The outer divisor classes lie in $\Pic^0(Y_d)$, so their morphic and singular images vanish.  The realized bracket is therefore $\langle0,n\rho h,0\rangle=\{0\}$, as proved in Theorem~\ref{thm:loss}.  To see that its target is nonzero, choose $0\ne u\in L^1H^1(C)=H^1(C^{\mathrm{an}};\Z)$.  The class $u\boxtimes\rho(h)^2\in L^3H^5(Y_d)$ has nonzero singular realization by the integral K\"unneth theorem.  The degree conversion is $(3,5)\mapsto(d-3,2d-5)$.
\end{proof}

\subsection{Products of curves and ample canonical bundles}
The projective factor can be replaced by a positive genus curve.  The full indeterminacy may then be larger; the following proof uses an additive detector instead of asserting that it vanishes.
\begin{theorem}[A Projection formula homomorphism on a product of curves]\label{thm:ample-products}
Let $d\ge2$ and let $C_1,\ldots,C_d$ be smooth connected complex projective curves, with $g(C_1)\ge1$.  Put $Z=\prod_{i=1}^d C_i$ and let $p_i:Z\to C_i$ be the projections.  For every $n\ge2$ there are divisor classes $a,b,c\in\CH^1(Z)$ with a specified value
\[
 \eta_Z\in\langle a,b,c\rangle_{\CH}\subset\CH^3(Z,1)
\]
such that both $\eta_Z$ and its class modulo the full indeterminacy have exact order $n$.  The realized morphic and singular brackets are $\{0\}$.  If all $g(C_i)\ge2$, then $K_Z$ is ample.  For $d\ge3$, $L^3H^5(Z)$ is nonzero and has an ordinary Lawson interpretation.
\end{theorem}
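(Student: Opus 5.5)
We mimic Theorem~\ref{thm:projective-transfer}, replacing the hyperplane class by a point class on the second curve, and detect the value by a projection-formula homomorphism instead of computing the indeterminacy. Choose $\alpha,\gamma\in\Pic^0(C_1)[n]$ by Corollary~\ref{cor:all-curves}, with disjoint divisors $D,D'$ and functions $f,g$ as in Theorem~\ref{thm:curve-linking}. Fix a closed point $x_2\in C_2$ and put $h_2=p_2^*[x_2]\in\CH^1(Z)$. Set
\[
 a=p_1^*\alpha,\qquad b=n\,h_2,\qquad c=p_1^*\gamma .
\]
Then $ab=n\,p_1^*\alpha\cdot h_2=0$ and $bc=0$. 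Explicit null-homotopies are the external products $\Gamma_f\boxt\{x_2\}\boxt C_3\times\cdots\times C_d$ and the analogous one for $g$. As in the proof of Theorem~\ref{thm:projective-transfer}, all intersections take place on the $C_1$ factor and are those already checked in Theorem~\ref{thm:curve-linking}. The resulting defining-system value is $\eta_Z=h_2\,p_1^*\theta$, and $n\eta_Z=0$ because $n\theta=0$.

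For the detector, let $\iota:C_1\hookrightarrow Z$ be the inclusion $C_1\times\{y\}$ for a point $y=(y_2,\dots,y_d)$ with $y_2\ne x_2$. Use instead the homomorphism
\[
 \Psi:\CH^3(Z,1)\to\CH^2(C_1,1),\qquad \Psi(z)=p_{1*}\bigl(z\cdot \textstyle\prod_{i\ge3}p_i^*[y_i]\bigr),
\]
which for $d=2$ is simply $p_{1*}$. By the projection formula, $\Psi(\eta_Z)=\theta\cdot\deg[x_2]\cdots=\theta$, so $\eta_Z$ has exact order $n$. To show the same for the quotient class, it suffices that $\Psi$ annihilates the full indeterminacy $a\,\CH^2(Z,1)+\CH^2(Z,1)\,c$. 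By the projection formula,
\[
 \Psi(a\,z)=\alpha\cdot p_{1*}\bigl(z\cdot\textstyle\prod p_i^*[y_i]\bigr)\in \alpha\cdot\CH^1(C_1,1),
\]
and $\alpha\cdot\CH^1(C_1,1)=\alpha\cdot\C^\times=0$ by the divisibility argument in the proof of Theorem~\ref{thm:curve-linking}; the same holds for $c$. Hence $\Psi$ descends to the quotient and sends the class of $\eta_Z$ to $\theta$, which has exact order $n$. This projection-formula step is the main point, and the delicate part is keeping the degrees straight: $z\cdot\prod_{i\ge3}p_i^*[y_i]\in\CH^{d}(Z,1)$ pushes forward to $\CH^{1}(C_1,1)$, which is exactly where divisibility applies.

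For the realizations, $a$ and $c$ lie in $\Pic^0(Z)$, so $\rho a=\rho c=0$. The realized brackets are therefore $\langle0,n\rho h_2,0\rangle$ with zero null-homotopies, and this bracket equals its own indeterminacy subgroup, which contains $0$. The claim that it is exactly $\{0\}$ is the assertion of Theorem~\ref{thm:loss}, which we invoke as in Corollary~\ref{cor:all-dimensions}; its justification is that the morphic products of $0$ with the relevant classes vanish. Ampleness follows from $K_Z=\boxtimes p_i^*K_{C_i}$, with each $K_{C_i}$ ample when $g(C_i)\ge2$. For $d\ge3$, take $0\ne u\in H^1(C_1;\Z)=L^1H^1(C_1)$. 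The class $u\boxt\rho[x_2]\boxt\rho[x_3]$ lies in $L^3H^5(Z)$ and has nonzero singular image by the integral Künneth theorem. Friedlander--Lawson duality identifies this group with $L_{d-3}H_{2d-5}(Z)$.
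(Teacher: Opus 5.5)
Your proposal is correct and follows the paper's proof essentially verbatim: you use the same triple $a=p_1^*\alpha$, $b=nh_2$, $c=p_1^*\gamma$ and the same factorwise defining system, and your $\Psi$ is exactly the paper's detector $T_3(v)=(p_1)_*(t\,v)$ with $t=h_3\cdots h_d$, which kills the full indeterminacy by the projection formula and the divisibility of $\CH^1(C_1,1)=\C^\times$. The only blemish is the preliminary restriction $\iota$ to $C_1\times\{y\}$ with $y_2\ne x_2$; you rightly abandon it, since it would send $\eta_Z$ to zero ($h_2$ restricts to zero there), and it should simply be deleted.
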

\begin{proof}
Choose $\alpha,\gamma,\theta$ on $C_1$ as in Corollary~\ref{cor:all-curves}.  For $i\ge2$ choose $P_i\in C_i(\C)$ and put $h_i=p_i^*[P_i]\in\CH^1(Z)$.  Set
\[
 a=p_1^*\alpha,\qquad b=nh_2,\qquad c=p_1^*\gamma,
 \qquad \eta_Z=p_1^*\theta\,h_2.
\]
The factorwise defining system of Theorem~\ref{thm:projective-transfer}, now with the point divisor $P_2$ in place of a hyperplane, gives this value.  The remaining factors carry their fundamental classes.  It follows in particular that $n\eta_Z=0$.

Let $t=h_3\cdots h_d$, with $t=1$ if $d=2$, and define the additive maps
\[
 T_s:\CH^s(Z,1)\longrightarrow\CH^{s-1}(C_1,1),
 \qquad T_s(v)=(p_1)_*(t\,v).
\]
These maps lower codimension by one: multiplication by $t$ raises it by $d-2$ and pushforward lowers it by $d-1$.  Proper pushforward and the projection formula in higher Chow theory \cite{Bloch1986,MVW} give
\[
 T_3(\eta_Z)=\theta,\qquad
 T_3(a v)=\alpha\,T_2(v),\qquad T_3(v c)=T_2(v)\,\gamma.
\]
Since $\CH^1(C_1,1)=\C^\times$, the torsion--divisibility calculation in Theorem~\ref{thm:curve-linking} shows that $T_3$ kills
\[
 I_Z=a\,\CH^2(Z,1)+\CH^2(Z,1)\,c,
\]
the complete indeterminacy.  Thus $k\eta_Z\in I_Z$ implies $k\theta=0$, which forces $n\mid k$.  Together with $n\eta_Z=0$, this proves both exact-order assertions.  We do not assert $I_Z=0$.

The two outer divisor classes belong to $\Pic^0(Z)$, so their morphic images vanish and the entire realized bracket is $\{0\}$.  The canonical bundle is $K_Z=\bigotimes_i p_i^*K_{C_i}$.  If every genus is at least two, all $K_{C_i}$ are ample; a common tensor power and the Segre embedding show that their exterior tensor product is ample.

For $d\ge3$, take $0\ne u\in L^1H^1(C_1)=H^1(C_1;\Z)$.  The morphic class $p_1^*u\,\rho(h_2)\rho(h_3)$ has nonzero singular realization by K\"unneth, so $L^3H^5(Z)\ne0$.  Duality gives $L^3H^5(Z)\cong L_{d-3}H_{2d-5}(Z)$.  This proves Theorem~\ref{thm:intro-ample}.
\end{proof}

This homomorphism is not multiplicative and is not being used to transport a Massey bracket by naturality.  It is used only with the projection formula on the two complete indeterminacy summands.  The application concerns disappearance at the motivic-to-morphic step; it is different from the ample canonical examples of \cite[Theorem~1.6]{HuLawson}, where the secondary values survive at the Lawson stage.

\subsection{Coprime degree pullback and cyclic covers}
The following is the higher Chow form of the usual transfer argument; compare the morphic version in \cite[Proposition~12.1]{HuLawson}. Only the projection formula is required, not multiplicativity of pushforward.
\begin{proposition}[Coprime degree transfer]\label{prop:coprime-transfer}
Let $f:W\to V$ be a surjective generically finite morphism of degree $e$ between smooth connected complex projective varieties of the same dimension. Suppose that
\[
 w\in\langle a,b,c\rangle_{\CH}\subset\CH^s(V,1),
 \qquad s=p+q+r,
\]
where $a,b,c$ have codimensions $p,q,r$, respectively. Assume that $nw=0$ and that the class of $w$ modulo the full indeterminacy $I$ has exact order $n\ge2$. If $\gcd(e,n)=1$, then $f^*w$ and its class modulo the full indeterminacy $I'$ of the pulled-back triple both have exact order $n$.

If $\rho(a)=\rho(c)=0$, the entire morphic and singular brackets of the pulled-back triple are $\{0\}$. Whenever the ordinary Lawson range holds, its Lawson bracket is $\{0\}$ as well.
\end{proposition}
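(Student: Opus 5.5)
The plan is the standard transfer argument: pullback sends brackets to brackets, and pushforward followed by the projection formula gives $f_*f^*=e$. Since $e$ is prime to $n$, multiplication by $e$ is invertible on $n$-torsion, and we compare the two full indeterminacies along $f_*$.

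\textbf{Step 1 (pullback of the bracket).} The flat pullback $f^*$ is realized by a weight-preserving multiplicative map of enhancements, namely $\Sigma^\infty f_+$ on mapping spectra. By Proposition~\ref{prop:naturality}, $f^*w\in\langle f^*a,f^*b,f^*c\rangle_{\CH}$ and $f^*I\subset I'$, where
\[
 I'=f^*a\,\CH^{q+r}(W,1)+\CH^{p+q}(W,1)\,f^*c .
\]
Since $nw=0$, also $n\,f^*w=0$. It therefore suffices to show that if $k f^*w\in I'$, then $n\mid k$. This gives the exact order of the class modulo $I'$, and hence of $f^*w$ itself.

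\textbf{Step 2 (transfer).} The pushforward $f_*:\CH^s(W,1)\to\CH^s(V,1)$ is defined because $f$ is proper and the dimensions agree. Two consequences of the projection formula do the work:
\[
 f_*(f^*x\cdot y)=x\cdot f_*y \quad\text{for all } x,y,
 \qquad f_*f^*v=f_*(1_W)\,v=e\,v,
\]
where $f_*1_W=e\cdot1_V$ because $f$ is surjective and generically finite of degree $e$. Only the projection formula is used, as the remark preceding the proposition indicates; multiplicativity of $f_*$ is not needed. Applying the first identity to both summands of $I'$ gives
\[
 f_*\bigl(f^*a\cdot u\bigr)=a\,f_*u,\qquad f_*\bigl(v\cdot f^*c\bigr)=(f_*v)\,c ,
\]
so $f_*I'\subset I$. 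For the second summand one needs the right-module version of the projection formula with the correct sign; since all classes involved have even degree, graded commutativity up to sign makes this harmless.

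\textbf{Step 3 (divisibility).} Suppose $k f^*w\in I'$. Then $ke\,w=f_*(kf^*w)\in I$, so $n\mid ke$ by the exact order of $w$ modulo $I$. Since $\gcd(e,n)=1$, this forces $n\mid k$, which completes the order assertions.

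\textbf{Step 4 (realization statement).} Naturality of $\rho$ and $\mathrm{cl}$ under pullback gives
\[
 \rho(f^*a)=f^*\rho(a)=0,\qquad \rho(f^*c)=f^*\rho(c)=0 .
\]
The realized triple is therefore $(0,\rho f^*b,0)$. Its morphic bracket contains $0$ and has indeterminacy
\[
 0\cdot L+L\cdot 0=0,
\]
so the bracket equals $\{0\}$. The same argument applies to the singular bracket. In the Lawson range, Friedlander--Lawson duality identifies the Lawson bracket with the morphic one, so it is $\{0\}$ as well.

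The main obstacle is making the projection formula legitimate in the form used, for higher Chow classes multiplied by pulled-back Chow classes on smooth projective varieties. The intended citation is \cite[Section~1.3]{Bloch1986} together with \cite{MVW}, and the sign in the right-module version must be checked. The identity $f_*1_W=e$ also deserves a sentence: $f_*[W]=\deg(f)[V]$ in $\CH^0(V)$ because $V$ is connected.
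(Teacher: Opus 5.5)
Your proposal is correct and follows the paper's proof step for step. The shared steps are: multiplicativity of $f^*$ places $f^*w$ in the pulled-back bracket; the projection formula gives $f_*I'\subseteq I$ and $f_*f^*w=e\,w$; then $n\mid ke$ forces $n\mid k$; and $\langle 0,b',0\rangle=\{0\}$ in the realized theories.

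Two slips are harmless. First, $f$ need not be flat (a blowup is not), but your justification via $\Sigma^\infty f_+$ does not use flatness. Second, in the right-module summand $v\in\CH^{p+q}(W,1)$ has odd degree, yet the sign is still trivial because $f^*c$ has even degree $2r$.
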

\begin{proof}
Pullback is multiplicative, so $f^*w$ is a value of the pulled-back bracket. Proper pushforward and the projection formula in higher Chow groups give
\begin{equation}\label{eq:transfer-full}
 f_*I'\subseteq I,\qquad f_*f^*w=e\,w.
\end{equation}
Indeed, for $v\in\CH^{q+r}(W,1)$ one has
$f_*((f^*a)v)=a f_*v$, and the other indeterminacy summand is treated in the same way. Since $f_*1_W=e1_V$, the projection formula also gives the second equality. These are standard functorialities of higher Chow theory \cite[Section~1.3]{Bloch1986}\cite[Lectures~17--19]{MVW}.

If $k f^*w\in I'$ for an integer $k$, then \eqref{eq:transfer-full} implies $ke\,w\in I$. Thus $n\mid ke$, and coprimality gives $n\mid k$. Since $n f^*w=0$, the quotient class has exact order $n$, and the same is true of the value itself. If the two outer morphic classes vanish on $V$, they still vanish after pullback. A bracket with both outer classes zero has zero full indeterminacy and contains zero, so it is the singleton $\{0\}$ in each realized theory.
\end{proof}

\begin{corollary}[Birational morphisms and projective bundles]\label{cor:bundle-birational}
Each specified nonzero value in Corollary~\ref{cor:all-curves} and Theorems~\ref{thm:projective-transfer} and~\ref{thm:ample-products} retains its exact order, both before and after quotienting by the full indeterminacy, under a projective birational morphism from a smooth projective variety. This applies in particular to a blowup along a smooth center. The same assertions hold after pullback along a projective bundle $q:\mathbb P(\mathcal E)\to V$, even without a section.
\end{corollary}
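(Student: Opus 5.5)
The plan is to deduce Corollary~\ref{cor:bundle-birational} from Proposition~\ref{prop:coprime-transfer} in the birational case, and to treat the projective bundle by a direct projection-formula argument. For a projective birational morphism $f:W\to V$ between smooth connected projective varieties, $f$ is surjective and generically finite of degree $e=1$. Hence $\gcd(e,n)=1$ for every $n$. Each specified value $w$ ($\theta$ from Corollary~\ref{cor:all-curves}, $\eta_r$ from Theorem~\ref{thm:projective-transfer}, $\eta_Z$ from Theorem~\ref{thm:ample-products}) satisfies $nw=0$, and its class modulo the full indeterminacy has exact order $n$. For $\theta$ and $\eta_r$ this holds because the indeterminacy is zero; for $\eta_Z$ it is proved directly. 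Proposition~\ref{prop:coprime-transfer} then gives exact order $n$ for $f^*w$ and for its quotient class. A blowup along a smooth center is the special case of a projective birational morphism from a smooth projective variety.

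For a projective bundle $q:\mathbb P(\mathcal E)\to V$ of relative dimension $m$, the map $q$ is not generically finite, so Proposition~\ref{prop:coprime-transfer} does not apply verbatim. I would rerun its proof with a modified transfer. Let $\xi=c_1(\mathcal O_{\mathbb P(\mathcal E)}(1))$ and define the additive map
\[
 S(v)=q_*(\xi^m v).
\]
This map preserves higher Chow codimension. Since $q_*(\xi^m)=1_V$, the projection formula gives $S(q^*w)=w$. It also gives
\[
 S\bigl((q^*a)v\bigr)=a\,q_*(\xi^m v),\qquad S\bigl(v\,q^*c\bigr)=q_*(\xi^m v)\,c,
\]
so $S$ maps the full indeterminacy $I'$ of the pulled-back triple into $I$. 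This is exactly the argument of Theorem~\ref{thm:ample-products}: $S$ is not assumed multiplicative and is used only on the two indeterminacy summands. Now suppose $k\,q^*w\in I'$. Applying $S$ gives $kw\in I$, hence $n\mid k$. Pullback is multiplicative, so $q^*w$ lies in the pulled-back bracket. Together with $n\,q^*w=0$, this gives both exact-order assertions.

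The statement allows projective bundles without a section, which means no splitting of $q^*$ by a section pullback is available. This does not matter here, because the argument uses only $q_*\xi^m=1$, which holds for any $\mathbb P(\mathcal E)$. The one point to check is that the projection formula and $q_*(\xi^m)=1_V$ hold in higher Chow groups. For the latter, I would reduce to degree zero by the projective bundle formula \cite[Theorem~15.12]{MVW}, or cite it as the standard Segre-class normalization. The realized morphic and singular brackets remain $\{0\}$, as stated in Proposition~\ref{prop:coprime-transfer}, because the outer classes still realize to zero after pullback.
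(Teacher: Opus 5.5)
Your proposal is correct and is essentially the paper's proof. Degree one reduces the birational case to Proposition~\ref{prop:coprime-transfer}, and for the projective bundle the trace $v\mapsto q_*(\xi^m v)$, with $q_*(\xi^m)=1_V$, the projection formula and $S q^*=\mathrm{id}$, replaces $f_*$ in \eqref{eq:transfer-full}, exactly as in the paper.
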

\begin{proof}
A birational morphism has degree one, so Proposition~\ref{prop:coprime-transfer} applies. For a vector bundle $\mathcal E$ of rank $r+1$, let $\zeta=c_1(\mathcal O_{\mathbb P(\mathcal E)}(1))$. The projective bundle formula \cite[Theorem~15.12]{MVW} gives $q_*(\zeta^r)=1_V$. The homomorphism
\[
 T(v):=q_*(\zeta^r v)
\]
preserves codimension and higher Chow degree, satisfies the projection formula, and obeys $Tq^*=\mathrm{id}$. Replacing $f_*$ by $T$ in \eqref{eq:transfer-full} proves the assertion. The outer realized classes remain zero. No vanishing of the new motivic indeterminacy is asserted, and no statement about arbitrary birational rational maps is implied.
\end{proof}

\begin{corollary}[Smooth cyclic covers with ample canonical bundle]\label{cor:cyclic-cover}
Fix $d\ge2$, $n\ge2$, and $e\ge2$ with $\gcd(e,n)=1$. There is a smooth connected projective $d$-fold $W$ with ample canonical bundle and a finite flat cyclic morphism
\[
 f:W\longrightarrow C\times\mathbb P^{d-1}
\]
of degree $e$, where $g(C)\ge1$, such that three divisor classes on $W$ define a motivic bracket with a specified value and a full indeterminacy quotient class of exact order $n$. Its entire morphic and singular brackets are $\{0\}$. For $d\ge3$, the morphic target $L^3H^5(W)$ is nonzero and is dual to $L_{d-3}H_{2d-5}(W)$.
\end{corollary}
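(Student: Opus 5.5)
The plan is to build $W$ as a smooth cyclic cover of degree $e$ of $V=C\times\mathbb P^{d-1}$ branched along a very ample divisor of large degree, and then transfer the bracket of Theorem~\ref{thm:projective-transfer} (with $r=d-1$) along $f$ using Proposition~\ref{prop:coprime-transfer}.

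\textbf{Step 1: construct the cover.} Fix $C$ of genus $g\ge1$ and an ample line bundle $\mathcal L$ on $V$. Take $\mathcal L=p^*\mathcal O_C(mP_0)\otimes\mathcal O_{\mathbb P^{d-1}}(k)$ with $m,k\gg0$, and replace it by $\mathcal L^{\otimes e}$ so that an $e$th root exists. Concretely, write $\mathcal L=\mathcal M^{\otimes e}$ with $\mathcal M$ ample. By Bertini, choose a smooth divisor $B\in|\mathcal M^{\otimes e}|$. Let $W\subset\operatorname{Tot}(\mathcal M)$ be the cyclic cover $t^e=s_B$. Then $W$ is smooth and connected (connected because $B\ne0$ makes the cover totally ramified along $B$), and $f$ is finite flat of degree $e$. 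Ramification gives
\[
 K_W=f^*\bigl(K_V\otimes\mathcal M^{\otimes(e-1)}\bigr),
\]
and $K_V=p^*K_C\otimes\mathcal O(-d)$. Taking $\mathcal M$ sufficiently positive in both factors makes $K_V\otimes\mathcal M^{\otimes(e-1)}$ ample, since $e-1\ge1$. Pullback of an ample bundle under a finite morphism is ample, so $K_W$ is ample.

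\textbf{Step 2: transfer the bracket.} Choose $\alpha,\gamma$ by Corollary~\ref{cor:all-curves}. Theorem~\ref{thm:projective-transfer} then gives $w=\eta_{d-1}$ with $nw=0$, zero indeterminacy, and exact order $n$. Proposition~\ref{prop:coprime-transfer} applies because $\gcd(e,n)=1$: $f^*w$ is a specified value of $\langle f^*a,f^*b,f^*c\rangle_{\CH}$, and both it and its class modulo $I'$ have exact order $n$. Since $a=p^*\alpha$ and $c=p^*\gamma$ lie in $\Pic^0$, we have $\rho(a)=\rho(c)=0$, and the same proposition gives morphic and singular brackets equal to $\{0\}$.

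\textbf{Step 3: nonvanishing of the target when $d\ge3$.} Take $0\ne u\in H^1(C;\Z)$ and consider $x=u\boxtimes\rho(h)^2\in L^3H^5(V)$, whose singular realization is nonzero by Corollary~\ref{cor:all-dimensions}. Then $f^*x\in L^3H^5(W)$, and in singular cohomology
\[
 f_*f^*\mathrm{cl}(x)=e\,\mathrm{cl}(x).
\]
Since $H^5(V^{\mathrm{an}};\Z)\cong\Z^{2g}$ by Künneth, this group is torsion-free, so $e\,\mathrm{cl}(x)\neq0$. Hence $\mathrm{cl}(f^*x)\ne0$, and therefore $L^3H^5(W)\ne0$. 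Friedlander--Lawson duality, applicable in the stated range as in Corollary~\ref{cor:all-dimensions}, gives $L^3H^5(W)\cong L_{d-3}H_{2d-5}(W)$.

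\textbf{Main obstacle.} The delicate point is Step 1: getting ampleness of $K_W$ while keeping $W$ smooth, connected and of degree exactly $e$. The first thing to check is that a sufficiently positive $\mathcal M$ on $C\times\mathbb P^{d-1}$ overcomes the negative factor $\mathcal O(-d)$ even when $e=2$ and $g=1$. This works because $\mathcal M$ can be taken with arbitrarily large bidegree.
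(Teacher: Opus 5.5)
Your proposal is correct and follows the paper's proof essentially step for step: the same cyclic cover of $C\times\mathbb P^{d-1}$ branched along a smooth divisor in $|\mathcal M^{\otimes e}|$ with $K_W\cong f^*(K_V\otimes\mathcal M^{\otimes(e-1)})$, the transfer of the bracket of Theorem~\ref{thm:projective-transfer} via Proposition~\ref{prop:coprime-transfer}, and nonvanishing of $L^3H^5(W)$ from $f_*f^*=e$ applied to the infinite-order class $\mathrm{cl}(u)\boxt h_{\sing}^2$. The only cosmetic difference is that you get connectedness from total ramification over the nonempty branch divisor, whereas the paper uses an Eisenstein argument at the generic point of the branch divisor; both arguments are valid.
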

\begin{proof}
Put $V=C\times\mathbb P^{d-1}$. Choose a line bundle $\mathcal L$ sufficiently positive that $\mathcal L^{\otimes e}$ is very ample and $K_V\otimes\mathcal L^{\otimes(e-1)}$ is ample. Let $s$ be a general section of $\mathcal L^{\otimes e}$ with nonempty smooth zero divisor $D$. The cyclic cover is
\[
 W=\Spec_V\!\left(\bigoplus_{i=0}^{e-1}\mathcal L^{-i}\right),
\]
where multiplication is determined by $s$. The standard cyclic-cover construction and ramification formula give smoothness and
\begin{equation}\label{eq:canonical-cyclic}
 K_W\cong f^*\!\left(K_V\otimes\mathcal L^{\otimes(e-1)}\right);
\end{equation}
see \cite[Section~3.5 and Lemmas~3.15--3.16]{EsnaultViehweg}. For clarity, connectedness follows at the generic point of any component of $D$: after trivializing $\mathcal L$ there, the polynomial $t^e-s$ is Eisenstein for that discrete valuation, because $D$ is reduced. Thus the cover is integral. Its finiteness gives projectivity and implies that the line bundle in \eqref{eq:canonical-cyclic} is ample.

Theorem~\ref{thm:projective-transfer}, with $r=d-1$, supplies the triple on $V$. Proposition~\ref{prop:coprime-transfer} preserves its exact-order quotient and gives the realized singleton brackets. When $d\ge3$, choose $0\ne u\in L^1H^1(C)$ and write $h_{\sing}=\mathrm{cl}\rho(h)$ for the singular hyperplane class on $\mathbb P^{d-1}$. The infinite-order class $\mathrm{cl}(u)\boxt h_{\sing}^2$ remains nonzero under $f^*$: singular-cohomology pushforward satisfies $f_*f^*=e\,\mathrm{id}$. Its morphic lift therefore remains nonzero. The stated Lawson degree follows from duality.
\end{proof}

Unlike Theorem~\ref{thm:ample-products}, this corollary may choose the cover after $n$ is fixed. For one cover of degree $e$, the transfer argument applies to orders prime to $e$; it makes no assertion for the remaining orders. The conclusion supplies smooth branched covers, not an assertion that every such cover is nonisomorphic to a product.

\section{Comparison and invariants beyond Lawson--Massey}\label{sec:realization}
\subsection{The comparison maps and their full indeterminacies}
Heller's monoidal comparison \cite[Theorem~4.11(2)]{Heller}, together with Proposition~\ref{prop:naturality}, gives
\begin{equation}\label{eq:bracket-realization}
 \begin{aligned}
 \rho\bigl(\langle a,b,c\rangle_{\CH}\bigr)&\subseteq
 \langle\rho a,\rho b,\rho c\rangle_{\mor},\\
 \mathrm{cl}\bigl(\langle\rho a,\rho b,\rho c\rangle_{\mor}\bigr)&\subseteq
 \langle\mathrm{cl}\rho a,\mathrm{cl}\rho b,\mathrm{cl}\rho c\rangle_{\sing}.
 \end{aligned}
\end{equation}
The last bracket is the classical singular-cohomology Massey bracket.  For $a,b,c$ of Chow codimensions $p,q,r$, respectively, its indeterminacy and the morphic indeterminacy are
\begin{align}
 I_{\mor}&=(\rho a)L^{q+r}H^{2(q+r)-1}(X)
       +L^{p+q}H^{2(p+q)-1}(X)(\rho c),\label{eq:morphic-indet}\\
 I_{\sing}&=(\mathrm{cl}\rho a)H^{2(q+r)-1}(X^{\mathrm{an}};\Z)
       +H^{2(p+q)-1}(X^{\mathrm{an}};\Z)(\mathrm{cl}\rho c).
       \label{eq:singular-indet}
\end{align}
Thus $\rho(I_M)\subseteq I_{\mor}$ and $\mathrm{cl}(I_{\mor})\subseteq I_{\sing}$.  Naturality alone gives inclusions of brackets, not equality: the target may have additional null-homotopies.

For a smooth projective $d$-fold, write
\[
 D_X:L^uH^v(X)\xrightarrow{\sim}L_{d-u}H_{2d-v}(X)
\]
for Friedlander--Lawson duality and $\operatorname{PD}_X$ for singular Poincar\'e duality \cite{FLDuality}.  They satisfy
\[
 \Phi\circ D_X=\operatorname{PD}_X\circ\mathrm{cl}.
\]
The Lawson--Massey bracket in \cite{HuLawson} is
\[
 \langle x_1,x_2,x_3\rangle_L
 :=D_X\langle D_X^{-1}x_1,D_X^{-1}x_2,D_X^{-1}x_3\rangle_{\mor}.
\]
For $x_i\in L_{p_i}H_{k_i}(X)$, its target and full indeterminacy are
\begin{align}
 &L_{p_1+p_2+p_3-2d}H_{k_1+k_2+k_3-4d+1}(X),\label{eq:lawson-target}\\
 I_L={}&x_1\bullet L_{p_2+p_3-d}H_{k_2+k_3-2d+1}(X)
       +L_{p_1+p_2-d}H_{k_1+k_2-2d+1}(X)\bullet x_3.
       \label{eq:lawson-indet}
\end{align}
Here $\bullet$ is the intersection product transferred from morphic cohomology.  These formulas are used only when every displayed group is in the ordinary Lawson range: its cycle index is nonnegative and its homological degree is at least twice that index. Equivalently, a morphic group $L^uH^v$ is transferred here only when $0\le u\le d$ and $v\le2u$.  The singular-homology bracket is likewise the singular-cohomology bracket transported by $\operatorname{PD}_X$.  Its indeterminacy is $\operatorname{PD}_X(I_{\sing})$, in the corresponding degrees.

The ordinary Chow-to-morphic comparison on diagonal groups is
\begin{equation}\label{eq:diag-morphic}
 L^pH^{2p}(X)\cong\CH^p(X)/\CH^p(X)_{\mathrm{alg}},
 \qquad
 \Pic(X)\longrightarrow L^1H^2(X)=\NS(X),
\end{equation}
where $\CH^p(X)_{\mathrm{alg}}$ is the subgroup algebraically equivalent to zero; see \cite{FLCocycles,FLDuality}.  In particular, the second map is the quotient by $\Pic^0(X)$.  This differs from the rational-equivalence relation defining $\CH^p(X)$.

\begin{theorem}[Complete loss of the curve bracket]\label{thm:loss}
For every bracket of Theorem~\ref{thm:projective-transfer}, and also for the brackets of Theorem~\ref{thm:ample-products}, the outer morphic classes are zero.  The entire morphic and singular-cohomology brackets are $\{0\}$.  Whenever the target has a nonnegative Lawson index, the associated Lawson bracket is also $\{0\}$.
\end{theorem}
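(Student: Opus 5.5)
The plan has three steps: show the outer realized classes vanish, then show the morphic and singular brackets are $\{0\}$ by a general argument about triples whose outer entries are zero, and finally transfer this to Lawson homology by duality.

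\textbf{Step 1: the outer morphic classes vanish.} In Theorem~\ref{thm:projective-transfer} and Theorem~\ref{thm:ample-products} the outer classes are $a=p^*\alpha$ and $c=p^*\gamma$ (resp.\ $p_1^*\alpha,p_1^*\gamma$), with $\alpha,\gamma\in\Pic^0(C)$. Pullback preserves algebraic equivalence, so $a,c\in\Pic^0$ of the product. The diagonal comparison \eqref{eq:diag-morphic} identifies $\rho$ on $\Pic$ with the quotient map $\Pic\to\NS$. Hence $\rho a=\rho c=0$ in $L^1H^2$, and therefore $\mathrm{cl}\rho a=\mathrm{cl}\rho c=0$ as well.

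\textbf{Step 2: a bracket with zero outer entries is $\{0\}$.} I would prove this once, for any associative multiplicative model (morphic or singular). The argument has two parts.
\begin{itemize}
\item The indeterminacies \eqref{eq:morphic-indet} and \eqref{eq:singular-indet} are sums of multiples of the outer classes, so both vanish. The bracket is therefore a single element.
\item To identify that element, apply the invariance argument of Section~3 to the cocycle representatives $A=0$ and $C=0$, which are allowed since the classes are zero. Then $dU=0$ and $dV=0$, so one may take $U=V=0$. This gives $W=0$, so $0$ lies in the bracket.
\end{itemize}
Consequently $\langle0,n\rho h,0\rangle_{\mor}=\{0\}$, and likewise for the singular bracket.

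The point needing care is that the bracket is taken over \emph{all} defining systems in the fixed enhancement. Representative independence (Section~3) together with Proposition~\ref{prop:naturality} ensures the value does not depend on which representatives of the zero classes are used. Combined with the zero indeterminacy, every value equals $0$.

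\textbf{Step 3: the Lawson bracket.} Whenever all displayed groups lie in the ordinary Lawson range, the Lawson bracket is defined as $D_X\langle D_X^{-1}x_1,D_X^{-1}x_2,D_X^{-1}x_3\rangle_{\mor}$. Friedlander--Lawson duality $D_X$ is an isomorphism carrying the zero outer classes to zero, so this bracket is $D_X(\{0\})=\{0\}$. Its indeterminacy $I_L$ in \eqref{eq:lawson-indet} also vanishes, because both summands contain a zero factor.

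\textbf{Main obstacle.} I expect the only delicate point to be the Lawson-range hypothesis. I would check the degree conversion $(3,5)\mapsto(d-3,2d-5)$ and require $d\ge3$, so that every transported group has a nonnegative cycle index. For $d=2$ (and $r=1$) the statement is made only for the morphic and singular brackets, which is exactly the clause ``whenever the target has a nonnegative Lawson index.''
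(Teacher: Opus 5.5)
Your proposal is correct and follows essentially the same route as the paper's proof. Both arguments kill the outer classes via $\Pic\to\NS$ and take zero outer representatives with zero null-homotopies to get the value $0$. They then note that both indeterminacy summands vanish, so the bracket is exactly $\{0\}$, and transfer this to Lawson homology by duality. Your explicit check of the Lawson range, requiring $d\ge3$, is a detail the paper leaves implicit in the clause about a nonnegative Lawson index.
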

\begin{proof}
Both outer divisor classes are pulled back from $\Pic^0(C)$, so \eqref{eq:diag-morphic} kills them.  A triple $\langle0,b',0\rangle$ contains zero by taking the two outer representatives and both null-homotopies to be zero.  Its two indeterminacy summands are also zero, so it is exactly $\{0\}$.  The argument applies to morphic and singular cohomology; duality gives the Lawson statement.  Notice that this proves a singleton assertion from the vanishing of the outer classes, not from naturality alone.
\end{proof}

For $Y=C\times\mathbb P^2$, duality and Dold--Thom \cite{Lawson1989,FLDuality} give
\[
 L^3H^5(Y)\cong L_0H_1(Y)\cong H_1(Y^{\mathrm{an}};\Z)\cong\Z^{2g(C)}.
\]
Also $H^5(Y^{\mathrm{an}};\Z)=H^1(C;\Z)\boxtimes h_{\sing}^2\cong\Z^{2g(C)}$, where $h_{\sing}=\mathrm{cl}\rho(h)$.  Thus the vanishing in Theorem~\ref{thm:loss} is not explained by zero target groups.

\subsection{Failure of recovery from the realized triple}
\begin{theorem}[A cyclic family with identical Lawson--Massey realization]\label{thm:fixed-realized-triple}
For fixed $C$, $n$, and $Y=C\times\mathbb P^2$ as in Theorem~\ref{thm:intro-loss-family}, one can choose $a,b,c$ and $\eta$ such that, for $0\le k<n$,
\begin{equation}\label{eq:fixed-family}
 \langle a,b,kc\rangle_{\CH}=\{k\eta\},\qquad
 I_M(a,b,kc)=0,\qquad
 (\rho a,\rho b,\rho(kc))=(0,n\rho h,0).
\end{equation}
The $n$ values are distinct; precisely $n-1$ are nonzero.  The subgroup they form is a copy of $\Z/n$ in
$\ker(\rho:\CH^3(Y,1)\to L^3H^5(Y))$.  The corresponding morphic and Lawson brackets are all $\{0\}$.
\end{theorem}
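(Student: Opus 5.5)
We take $\alpha,\gamma$ from Corollary~\ref{cor:all-curves}, so that $e_n(\alpha,\gamma)$ is a primitive $n$th root of unity and $\theta$ has exact order $n$. Then we apply Theorem~\ref{thm:projective-transfer} with $r=2$, setting $a=p^*\alpha$, $b=nh$, $c=p^*\gamma$ and $\eta=\eta_2=h\,p^*\theta$. That theorem already gives $ab=bc=0$, $\langle a,b,c\rangle_{\CH}=\{\eta\}$, $I_M(a,b,c)=0$, and $\operatorname{ord}(\eta)=n$.

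For the scaled triples, note that $p^*(k\gamma)=kc$ is again pulled back from $\Pic^0(C)[n]$. Hence $b\cdot kc=0$. The defining system of Theorem~\ref{thm:projective-transfer} can be run with $D'$ replaced by $kD'$ and $g$ replaced by $g^k$; the case $k=0$ uses $g=1$ and the zero chain. Since $\Theta$ is linear in the pair $(D',\Gamma_g)$, this yields the value $k\,\Theta\boxt H$, and therefore the class $k\eta$. Alternatively, and without any cycle manipulation, we can use linearity of Massey products in an outer variable: if $V$ is a null-homotopy for $bc$, then $kV$ is one for $b(kc)$, and $W=AkV-U(kC)=kW$.

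The indeterminacy argument of Theorem~\ref{thm:projective-transfer} shows $I_M(a,b,kc)=0$. It uses the projective bundle decomposition of $\CH^2(Y,1)$. On the first summand, $\CH^3(C,1)=0$ kills everything. On the second summand $h\,p^*\C^\times$, the factor $kc$ is $n$-torsion while $\C^\times$ is divisible, so the product vanishes. The same reasoning handles $a\,\CH^2(Y,1)$. Therefore every bracket is the singleton $\{k\eta\}$.

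Because $\eta$ has exact order $n$, the elements $k\eta$ for $0\le k<n$ are distinct. Exactly the $k\neq0$ give nonzero values, and together they form the cyclic subgroup $\langle\eta\rangle\cong\Z/n$. To show this subgroup lies in $\ker\rho$, we argue as follows. Since $\rho a=\rho(kc)=0$ by \eqref{eq:diag-morphic}, naturality \eqref{eq:bracket-realization} gives $\rho(k\eta)\in\langle0,n\rho h,0\rangle_{\mor}$. Theorem~\ref{thm:loss} identifies this bracket as $\{0\}$, so $\rho(k\eta)=0$.

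The realized triples are all $(0,n\rho h,0)$, because $\rho b=n\rho h$. The morphic, singular, and Lawson brackets are $\{0\}$ by the argument of Theorem~\ref{thm:loss}: the outer representatives are zero and the indeterminacy is trivial. For the Lawson statement, we check the range condition for $L^3H^5(Y)\cong L_0H_1(Y)$ on a threefold, and the range for the indeterminacy groups of \eqref{eq:lawson-indet}.

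The main point needing care is to state cleanly why the values $k\eta$ land in one fixed group $\CH^3(Y,1)$ with zero indeterminacy for every $k$. The quotient-by-$k$ issue is the delicate part, but the divisibility argument above settles it uniformly in $k$.
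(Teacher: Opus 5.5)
Your proposal is correct and is essentially the paper's proof. The paper also keeps $A,B,U$ fixed and replaces $C,V$ by $kC,kV$ to obtain $k\eta$, reruns the projective-bundle indeterminacy argument of Theorem~\ref{thm:projective-transfer} for $k\gamma$ (including $k=0$), and reads off the realized triples and brackets from Theorem~\ref{thm:loss}. Your cycle-level variant via $g^k$ also works, but $\Gamma_{g^k}\neq k\Gamma_g$ as chains, so its value is only homologous to $k\Theta\boxt H$; this uses $[P,u^k]=k[P,u]$ or the vanishing indeterminacy, not literal linearity.
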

\begin{proof}
Choose $\alpha,\gamma,\theta$ of exact order $n$ as in Corollary~\ref{cor:all-curves}, and use Theorem~\ref{thm:projective-transfer}.  For a defining system $A,B,C,U,V$ giving $\eta$, keep $A,B,U$ fixed and replace $C,V$ by $kC,kV$.  Its cohomology class is $[k(AV-UC)]=k\eta$.  The proof of Theorem~\ref{thm:projective-transfer} applies to $k\gamma$ and gives zero full indeterminacy for every $k$, including $k=0$.  Since $\eta$ has exact order $n$, the values are distinct.  Theorem~\ref{thm:loss} identifies all realized triples and brackets.  This proves Theorem~\ref{thm:intro-loss-family}.
\end{proof}

The separation can also be measured in one fixed scalar target. Keeping the projection $p:Y\to C$ and the hyperplane class $h$ fixed, define
\[
 \sigma_n(a,b,kc):=(\pi_C)_*p_*(h\,k\eta)\in\C^\times.
\]
The projective-bundle formula and Theorem~\ref{thm:curve-linking} give
\begin{equation}\label{eq:scalar-fixed-family}
 \sigma_n(a,b,kc)=e_n(\alpha,\gamma)^{-k}.
\end{equation}
These values run through $\mu_n$.  No function of the one realized triple $(0,n\rho h,0)$ and its realized Massey bracket $\{0\}$ can recover these different values.  This is the precise nonfactorization assertion for ordered triples on the fixed variety $Y$, with the chosen projection and hyperplane class. It is not a numerical invariant of $Y$ alone.  The scalar formula is the classical Weil pairing; its role here is to make the failure of recovery at the secondary-realization level explicit.

\section{Scalar decomposability, divisibility, and coefficient changes}
For a smooth connected projective complex variety $X$ and $s\ge1$, define the \emph{scalar decomposable subgroup}
\[
 \Dsc^s(X):=\im\bigl(\CH^{s-1}(X)\otimes_{\mathbb Z}\mathbb C^\times
 \longrightarrow \CH^s(X,1)\bigr),
\]
where $\mathbb C^\times=\CH^1(\Spec\mathbb C,1)$ and only products with constant units are intended.

\begin{proposition}\label{prop:scalar}
The subgroup $\Dsc^s(X)$ is divisible and its morphic realization is zero.  The curve class $\theta$ lies in $\Dsc^2(C)$, the projective product value $\eta_r$ lies in $\Dsc^3(X_r)$, and the value $\eta_Z$ of Theorem~\ref{thm:ample-products} lies in $\Dsc^3(Z)$.  Consequently all these values have zero image in the respective groups $\CH^s(-,1)/m$ for every $m>0$.
\end{proposition}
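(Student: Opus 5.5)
The plan has three parts: show $\Dsc^s(X)$ is divisible, show its morphic image vanishes, and show the three specified values lie in it. The last assertion then follows at once, since a divisible subgroup maps to zero in $\CH^s(-,1)/m$.

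\emph{Divisibility.} The subgroup $\Dsc^s(X)$ is generated by the products $z\cdot[u]$ with $z\in\CH^{s-1}(X)$ and $u\in\C^\times$, where $[u]$ is pulled back from $\CH^1(\Spec\C,1)$ along the structure morphism. Since $\C^\times$ is divisible, for each $m>0$ choose $v$ with $v^m=u$. Then $[u]=m[v]$, and bilinearity of the product gives $z[u]=m\,(z[v])$. A subgroup generated by elements each divisible by every $m$ is divisible, because the image of a divisible group under a homomorphism is divisible. The tensor product $\CH^{s-1}(X)\otimes\C^\times$ is itself divisible, so its image is too.

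\emph{Vanishing of the morphic realization.} Realization $\rho$ is multiplicative and natural for the structure morphism, so $\rho(z[u])=\rho(z)\,\pi_X^*\rho([u])$. It therefore suffices to check that $\rho\colon\CH^1(\Spec\C,1)\to L^1H^1(\Spec\C)$ is zero. Morphic cohomology of a point in weight one and degree one is $\pi_1$ of the cycle space of a point in the relevant weight, which is $H^1(\mathrm{pt};\Z)=0$; equivalently, $L^1H^1$ of a point agrees with its singular cohomology by Friedlander--Lawson's weight-one computation. Alternatively, $\rho$ of a divisible group lands in a divisible subgroup of the finitely generated group $L^1H^1(\mathrm{pt})$, and must therefore be zero. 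I would present the vanishing of the target group as the cleanest argument.

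\emph{Membership of the specified values.} For $\theta$, use the representative \eqref{eq:theta}. Each term $[P,u]$ is the pushforward along $i_P\colon\Spec\C\to C$ of the constant unit $[u]$, and $i_{P*}[u]=[P]\cdot\pi_C^*[u]$ by the projection formula, since $\pi_C\circ i_P=\mathrm{id}$. Hence
\[
 \theta=\sum_P m_P\,[P]\cdot[g(P)]-\sum_Q m'_Q\,[Q]\cdot[f(Q)]\in\Dsc^2(C).
\]
Next, $\eta_r=h\,p^*\theta$. Since $p^*$ is multiplicative and commutes with pullback of constants, $\eta_r$ is a sum of terms $(h\,p^*[P])\cdot[u]$, so it lies in $\Dsc^3(X_r)$. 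The same argument gives $\eta_Z=p_1^*\theta\,h_2\in\Dsc^3(Z)$.

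The main obstacle is the identification $i_{P*}[u]=[P]\cdot\pi^*[u]$ at the level of the cubical cycle $[P,u]$. This needs the projection formula for the closed immersion of a point in higher Chow groups. Concretely, $\pi_C^*[u]$ is represented by $C\times\{u\}\subset C\times\square$, and its intersection with $P\times\square$ is proper and equals $[P,u]$. Here I would invoke the good-position product already used in Theorem~\ref{thm:curve-linking}.
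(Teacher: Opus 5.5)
Your proposal is correct and follows the paper's proof step for step. The three steps match: divisibility from $m$th roots in $\C^\times$; vanishing of $\rho$ on $\Dsc^s(X)$ via multiplicativity and $L^1H^1(\Spec\C)=0$; and membership of $\theta$, hence of $\eta_r=h\,p^*\theta$ and $\eta_Z=p_1^*\theta\,h_2$, from the representative \eqref{eq:theta} as a sum of point classes times constant units.

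Your projection-formula identification $i_{P*}[u]=[P]\cdot\pi_C^*[u]$ just spells out what the paper asserts in one line. One phrasing should be tightened: a subgroup generated by elements that are divisible in the ambient group need not itself be divisible. What your argument actually uses, correctly, is that the root $z[v]$ again lies in $\Dsc^s(X)$.
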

\begin{proof}
Every $u\in\mathbb C^\times$ has an $m$th root, so every finite sum of scalar products admits an $m$th root inside the same image subgroup.  Thus $\Dsc^s(X)$ is divisible.  The constant unit class maps to $L^1H^1(\Spec\mathbb C)=0$, so multiplicativity kills $\Dsc^s(X)$ under morphic realization.  Formula \eqref{eq:theta} is a sum of products of point classes with units, hence $\theta\in \Dsc^2(C)$; multiplying after pullback by $h$ gives $\eta_r\in\Dsc^3(X_r)$.  The same argument with the divisor $h_2$ gives $\eta_Z\in\Dsc^3(Z)$.
\end{proof}

Scalar decomposability is not Massey indeterminacy: Theorem~\ref{thm:projective-transfer} has zero full indeterminacy even though its value is scalar decomposable.  Nor is finite order incompatible with divisibility: $\mathbb C^\times$ itself is divisible and contains all roots of unity.

For finite coefficients, Proposition~\ref{prop:uct} gives
\[
 0\longrightarrow\CH^s(X,1)/m\longrightarrow\CH^s(X,1;\Z/m)
 \longrightarrow\CH^s(X)[m]\longrightarrow0.
\]
The first term, which receives direct reduction of integral classes, need not be the whole coefficient group. Proposition~\ref{prop:scalar} shows that the specified values above have zero direct reduction. For these particular triples one can say more about the entire reduced bracket.

\begin{proposition}[Finite-coefficient vanishing for the constructed triples]\label{prop:finite-zero}
Let $\langle a,b,c\rangle_{\CH}$ be any curve based bracket of Sections~\ref{sec:curve}--\ref{sec:applications}, any member of the family in Theorem~\ref{thm:fixed-realized-triple}, or a pullback of one of these triples to a smooth complex projective variety. For every integer $m\ge2$, its bracket in the derived $\Z/m$-coefficient motivic theory is the singleton
\[
 \langle a_m,b_m,c_m\rangle_{\CH;\Z/m}=\{0\}.
\]
Its rational bracket is also $\{0\}$. Here $a_m,b_m,c_m$ denote the images under change of coefficients; no tensor-product formula for higher coefficient groups is being assumed.
\end{proposition}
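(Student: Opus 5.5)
The plan is to mirror the proof of Theorem~\ref{thm:loss}: the reduced outer classes vanish as cohomology classes, not just in a realization. In every case the outer classes are $a=p^*\alpha$ and $c=k\,p^*\gamma$ (or their pullbacks), with $\alpha,\gamma\in\Pic^0(C)[n]$. Over $\Z/m$ I would show that $a_m=c_m=0$ in $\CH^1(X;\Z/m)\cong\CH^1(X)/m$, using \eqref{eq:ordinary-chow-coeff}. This follows because $\Pic^0(C)$ is a complex torus, hence divisible. Writing $\alpha=m\alpha'$ with $\alpha'\in\Pic^0(C)$ gives $p^*\alpha=m\,p^*\alpha'\in m\CH^1(X)$, and likewise for $c$ and for any further pullback. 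Over $\Q$ the classes are torsion, so they vanish after tensoring with $\Q$.

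Once the outer classes vanish, the bracket contains zero. In the derived coefficient model $M_X\otimes^{\mathbf L}\Z/m$ one may take $A_m=C_m=0$ and $U=V=0$, so $W=0$. By the representative-independence part of Theorem~\ref{thm:intro-construction}, the bracket does not depend on the chosen cocycles for $a_m,c_m$. Hence $0\in\langle a_m,b_m,c_m\rangle_{\CH;\Z/m}$, and the bracket is a coset of
\[
 a_m\,\CH^{q+r}(X,1;\Z/m)+\CH^{p+q}(X,1;\Z/m)\,c_m .
\]
Both summands are products with the zero class, so the indeterminacy is $0$ and the bracket is the singleton $\{0\}$.

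For rational coefficients the same argument applies in $M_X\otimes\Q$, since $a_\Q=c_\Q=0$ in $\CH^1(X)\otimes\Q$. Equivalently, one can use \eqref{eq:flat-quotient} together with the fact that the integral values are torsion.

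The only delicate point is justifying the full-indeterminacy formula in the derived $\Z/m$ theory, including Tor contributions. I would handle this by running the Section~3 calculation verbatim in the associative model $M_X\otimes^{\mathbf L}\Z/m$, obtained as a $K$-flat cofibrant replacement tensored with $\Z/m$. That calculation uses only the dga structure, so the formula holds for any coefficient dga. Once this is established, the zero outer classes settle everything, and no tensor-product description of the groups $\CH^{*}(X,1;\Z/m)$ is needed.
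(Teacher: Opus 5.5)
Your proposal is correct and follows essentially the same route as the paper. Divisibility of $\Pic^0(C)$ gives $a_m=c_m=0$ in $\CH^1(X)/m$ via \eqref{eq:ordinary-chow-coeff}, and torsion gives the rational vanishing; the bracket $\langle 0,b_m,0\rangle$ then has the zero value from zero representatives and null-homotopies, and both indeterminacy summands vanish. Your observation that the Section~3 calculation runs verbatim in the derived model $M_X\otimes^{\mathbf L}\Z/m$ is exactly the justification the paper leaves implicit for why $\Tor$ contributions do not affect the conclusion.
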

\begin{proof}
Both outer classes are pulled back from $\Pic^0(C)$ for a smooth projective curve $C$. The group $\Pic^0(C)(\C)$ is divisible: it is a complex torus, and multiplication by any positive integer is surjective. Its image under pullback is therefore divisible too. By \eqref{eq:ordinary-chow-coeff},
\[
 H_M^2(X,(\Z/m)(1))=\CH^1(X,0;\Z/m)
 \cong\Pic(X)/m.
\]
Consequently $a_m=c_m=0$, for every $m$, whether or not $m$ divides the original torsion order. The bracket is thus $\langle0,b_m,0\rangle$. Zero representatives and zero null-homotopies give a zero value, and both full indeterminacy summands vanish. This proves the singleton assertion even though the intermediate coefficient groups may have nonzero $\Tor$ terms. The rational assertion follows in the same way because the original outer classes are torsion.
\end{proof}

This proposition concerns only the transported triples above, not all finite-coefficient motivic Massey products on these varieties. It is compatible with prime-local nonvanishing: reduction modulo $m$ and flat localization at a prime are different operations. In particular, exact finite order of an integral value does not imply nonvanishing of its reduced bracket.

\section{Comparison with the Lawson--Massey structure}\label{sec:lawson-comparison}
\subsection{Two distinct realization kernels}
For one fixed defined triple $a,b,c$ of codimensions $p,q,r$, put $s=p+q+r$ and define
\begin{equation}\label{eq:quotient-chain}
 Q_M:=\frac{\CH^s(X,1)}{I_M}
 \xrightarrow{\bar\rho}
 Q_{\mor}:=\frac{L^sH^{2s-1}(X)}{I_{\mor}}
 \xrightarrow{\overline{\mathrm{cl}}}
 Q_{\sing}:=\frac{H^{2s-1}(X^{\mathrm{an}};\Z)}{I_{\sing}}.
\end{equation}
The indeterminacies are \eqref{eq:intro-indet}, \eqref{eq:morphic-indet}, and \eqref{eq:singular-indet}.  Algebraically there is a two-step filtration
\[
 K_1:=\ker\bar\rho\ \subseteq\
 K_2:=\ker(\overline{\mathrm{cl}}\circ\bar\rho),
 \qquad
 K_2/K_1\cong\im\bar\rho\cap\ker\overline{\mathrm{cl}}.
\]
This is the elementary first-isomorphism theorem, not a new exactness assertion about realization on whole cohomology theories.  Moreover, not every element of these kernels must arise as a Massey value.  The examples matter because their kernel elements \emph{do} arise from defining systems and survive the relevant full indeterminacy.

The present construction realizes $K_1$.  The examples of Hu \cite{HuLawson} realize the second stage: their motivic classes have nonzero images in $Q_{\mor}$ that vanish in $Q_{\sing}$.  In this comparison the varieties and triples may differ; their quotient groups must not be identified merely because they occupy the same position in \eqref{eq:quotient-chain}.

\subsection{The Lawson--Massey product}
In \cite{HuLawson}, Theorems~1.4--1.6 give integral order-two brackets on an eightfold, an infinite family over one fixed nonzero ordered triple in singular homology, and transferred examples with ample canonical bundle.  Its Theorem~13.1 and Corollary~13.3 provide motivic lifts
\[
 \Xi_j\in\CH^5(X_L,1),\qquad X_L=S\times T\times B\times\mathbb P^1,
\]
where $S,T$ are surfaces and $B$ is an abelian threefold.  The morphic images of $\Xi_j$ survive the complete morphic indeterminacies and have zero singular-cohomology image.  Here $\Xi_j$ denotes the classes written $[\Theta_j]$ in \cite{HuLawson}; the changed letter avoids confusion with our curve cycle $\Theta$.

All curve, projective product, fixed realization, ample canonical, and formality results proved here are independent of its geometric existence theorems.  The comparison and the combined example below invoke those theorems as cited results.

\begin{proposition}[Lifting a Lawson-detected secondary value]\label{prop:lifting}
Suppose a motivic triple is defined and has a value $v\in\CH^s(X,1)$ whose image is nonzero modulo $I_{\mor}$.  Then $v\notin I_M$.  If the specified value satisfies $\mathrm{cl}\rho(v)=0$, its motivic quotient class maps to zero in the corresponding singular-cohomology quotient.
\end{proposition}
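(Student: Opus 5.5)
The plan is to derive both assertions from the containments $\rho(I_M)\subseteq I_{\mor}$ and $\mathrm{cl}(I_{\mor})\subseteq I_{\sing}$, which follow from multiplicativity of $\rho$ and $\mathrm{cl}$ together with the explicit indeterminacy formulas \eqref{eq:intro-indet}, \eqref{eq:morphic-indet} and \eqref{eq:singular-indet}.

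The first assertion is an argument by contrapositive. Suppose $v\in I_M$. Then $v=a x+y c$ with $x\in\CH^{q+r}(X,1)$ and $y\in\CH^{p+q}(X,1)$. Since $\rho$ is multiplicative by Heller's monoidal comparison, we get
\[
\rho(v)=(\rho a)\rho(x)+\rho(y)(\rho c).
\]
By \eqref{eq:motivic-higher Chow}, the groups $\CH^{q+r}(X,1)$ and $\CH^{p+q}(X,1)$ are $H_M^{2(q+r)-1}(X,\Z(q+r))$ and $H_M^{2(p+q)-1}(X,\Z(p+q))$. Their images under $\rho$ therefore land in $L^{q+r}H^{2(q+r)-1}(X)$ and $L^{p+q}H^{2(p+q)-1}(X)$, which are exactly the groups appearing in \eqref{eq:morphic-indet}. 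Hence $\rho(v)\in I_{\mor}$, contradicting the hypothesis. It follows that $v\notin I_M$, so the motivic quotient class of $v$ is nonzero. By Proposition~\ref{prop:naturality}, this class is well defined independently of choices.

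For the second assertion, the induced map $\overline{\mathrm{cl}}\circ\bar\rho:Q_M\to Q_{\sing}$ is well defined by the same two containments. The second containment follows because $\mathrm{cl}$ is multiplicative and sends $L^uH^v(X)$ to $H^v(X^{\mathrm{an}};\Z)$ in matching degrees. Under this map the class $[v]$ goes to $[\mathrm{cl}\rho(v)]=[0]=0$ in $Q_{\sing}$.

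I would also note, using \eqref{eq:bracket-realization}, that $\mathrm{cl}\rho(v)$ lies in the singular bracket of the realized triple. The zero coset of $Q_{\sing}$ is therefore the class of that bracket, so the singular bracket contains zero.

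The only point needing care is degree bookkeeping: checking that realization takes the motivic indeterminacy summands onto the correct summands of $I_{\mor}$ and $I_{\sing}$. This comes down to matching bidegrees, since multiplicativity is already supplied by \cite{Heller}. I do not expect any real obstacle.
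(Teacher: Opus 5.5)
Your proposal is correct and follows the paper's proof. The paper likewise deduces $v\notin I_M$ from $\rho(I_M)\subseteq I_{\mor}$, and gets the second assertion by composing the induced quotient maps $\overline{\mathrm{cl}}\circ\bar\rho$; you simply spell out these steps through multiplicativity and degree matching. One small correction: the independence of the class of $v$ modulo $I_M$ from the choice of defining system comes from the coset statement of Theorem~\ref{thm:intro-construction}, not from Proposition~\ref{prop:naturality}.
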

\begin{proof}
Naturality gives $\rho(I_M)\subseteq I_{\mor}$, so $v\in I_M$ would contradict the assumed nonvanishing.  The last assertion follows by composition.  The existence of a motivic defining system is essential; lifting three group elements alone does not establish it.
\end{proof}

\subsection{The complementary geometric conclusions}
\begin{table}[htbp]
\centering\small
\caption{Distinct secondary realization phenomena.  ``Zero'' for a quotient class means that its bracket contains zero.}
\label{tab:comparison}
\begin{tabularx}{\textwidth}{@{}>{\raggedright\arraybackslash}p{0.20\textwidth}>{\raggedright\arraybackslash}X>{\raggedright\arraybackslash}X@{}}
\toprule
Feature & This paper & Lawson--Massey theory \cite{HuLawson} \\
\midrule
Basic example & $C\times\mathbb P^2$, $g(C)\ge1$ & Eightfold $S\times T\times B\times\mathbb P^1$ \\
Specified values & Exact order $n$, every $n\ge2$ & Exact order two \\
Indeterminacy control & Zero on $C\times\mathbb P^r$; annihilated by a Projection formula homomorphism on products of curves & Morphic indeterminacy annihilated by a restriction homomorphism; motivic nonvanishing follows by realization \\
Morphic/Lawson bracket & Singleton $\{0\}$ & Nonzero quotient class \\
Singular realization & Singleton $\{0\}$ & Zero quotient class; a specified value has zero cycle-map image \\
Realized classes of the triple & The two outer classes vanish & All three realized classes may be nonzero \\
Realized ordered triple & $n$ distinct motivic values over one Lawson triple and bracket & Infinitely many independent Lawson values over one singular-homology triple \\
Scalar decomposability & Values belong to $\Dsc^3$ and vanish in every quotient modulo an integer & Motivic values survive modulo scalar decomposables and modulo two \\
Ample canonical bundle & Products of curves, every $d\ge2$; Lawson range for $d\ge3$ & Every $d\ge8$, with nonzero Lawson brackets \\
Formality obstruction & Motivic, over $\Z$ and every $\Z_{(\ell)}$ & Motivic and morphic, over $\Z$ and $\Z_{(2)}$ \\
\bottomrule
\end{tabularx}
\end{table}

The first construction does not supersede the second.  Its outer Chow classes are already lost under $\Pic\to\NS$; it does not establish this separation with all three morphic classes nonzero.  It detects an earlier loss of information: motivic classes are already annihilated by morphic realization.  The examples in \cite{HuLawson} detect a later loss, after the morphic/Lawson secondary class has survived.  Both use the classical secondary-operation formalism; their geometry and the realization kernels they detect are different.  Neither torsion construction supplies a nonzero rational bracket.

\begin{corollary}[Two realization stages on one variety, using \cite{HuLawson}]\label{cor:two-stage-one-variety}
Using the motivic and morphic nonvanishing results of \cite[Theorem~13.1 and Corollary~13.3]{HuLawson}, there is a smooth projective elevenfold carrying two motivic triple brackets with the following properties: the first is nonzero motivically and its entire morphic bracket is $\{0\}$; the second is nonzero modulo full morphic indeterminacy but its singular-cohomology bracket contains zero.
\end{corollary}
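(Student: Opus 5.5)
The plan is to put both constructions on a single product variety, pulling each back along a projection. Take $X_L=S\times T\times B\times\mathbb P^1$, the eightfold of \cite[Theorem~13.1]{HuLawson}, and $Y=C\times\mathbb P^2$ with $g(C)\ge1$. Set $X=X_L\times Y$, which is smooth, projective and connected of dimension $8+3=11$. Write $\pi_L:X\to X_L$ and $\pi_Y:X\to Y$ for the projections.

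For the first bracket, pull back the triple $(a,b,c)$ of Theorem~\ref{thm:projective-transfer} (with $r=2$ and some $n\ge2$) along $\pi_Y$. The fibre $X_L$ is not a projective bundle, so Corollary~\ref{cor:bundle-birational} does not apply directly. Instead I would use a transfer homomorphism in the style of that corollary and of Theorem~\ref{thm:ample-products}. Choose a class $z\in\CH^8(X_L)$ of a closed point, set $t=\pi_L^*z$, and define $T(v)=(\pi_Y)_*(t\,v)$. By the projection formula, $T$ preserves codimension shifts correctly and satisfies $T\pi_Y^*=\mathrm{id}$, because $(\pi_Y)_*\pi_L^*z=\deg z=1$. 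It also satisfies $T((\pi_Y^*a)v)=a\,T(v)$ and the analogous identity on the other side. Hence $T(I')\subseteq I_M(a,b,c)=0$, while $T(\pi_Y^*\eta)=\eta$ has exact order $n$. So $\pi_Y^*\eta$ is a value of the pulled-back bracket and is nonzero modulo the full indeterminacy. Both outer classes of the pulled-back triple lie in $\Pic^0(X)$, so their morphic images vanish by \eqref{eq:diag-morphic}. The argument of Theorem~\ref{thm:loss} then shows the entire morphic bracket is $\{0\}$.

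For the second bracket, pull back the triple of \cite[Theorem~13.1, Corollary~13.3]{HuLawson} along $\pi_L$ and use the value $\pi_L^*\Xi_j$. The same construction works in morphic cohomology: take a point class $w$ on $Y$ and define $T'(x)=(\pi_L)_*(\pi_Y^*\rho(w)\,x)$. Morphic cohomology has proper pushforward for projections and a projection formula, which is the input of \cite[Proposition~12.1]{HuLawson}. This gives $T'\pi_L^*=\mathrm{id}$ and $T'(I_{\mor}')\subseteq I_{\mor}$. So the image of $\pi_L^*\Xi_j$ survives modulo the pulled-back morphic indeterminacy, and by Proposition~\ref{prop:lifting} it is also nonzero motivically. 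For the singular statement, $\mathrm{cl}\rho(\pi_L^*\Xi_j)=\pi_L^*\mathrm{cl}\rho(\Xi_j)=0$. Naturality \eqref{eq:bracket-realization} then puts $0$ in the singular bracket, so that bracket contains zero.

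I expect the main obstacle to be the transfer step in morphic cohomology. It needs proper pushforward along $\pi_L$ together with a projection formula that is compatible with Heller's multiplicative structure. If that is awkward to justify, my fallback is to use Friedlander--Lawson duality: pass to Lawson homology, where proper pushforward is standard, and check the projection formula there for exterior products with a point class.
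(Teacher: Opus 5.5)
Your proof is correct, and its skeleton matches the paper's. You pull each triple back from its own factor of $X=X_L\times Y$ and find a left inverse of the pullback that carries the new full indeterminacy into the old one. You then use vanishing of the outer morphic classes for the first bracket, and naturality of $\mathrm{cl}\circ\rho$ for the singular assertion of the second.

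Where you differ is in the choice of left inverse. The paper simply uses that both projections have sections. Restriction along $i_{x_0}\colon Y\to X$, $y\mapsto(x_0,y)$, or along $s_{y_0}\colon X_L\to X$, $x\mapsto(x,y_0)$, is a multiplicative pullback. So Proposition~\ref{prop:naturality} alone gives $i_{x_0}^*\pi_Y^*=\mathrm{id}$ and $i_{x_0}^*(I')\subseteq I_M(a,b,c)$, and likewise for $s_{y_0}^*$ in morphic cohomology.

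Your transfers are in fact these same restrictions. Since $\pi_L^*[x_0]=(i_{x_0})_*1_Y$, the projection formula gives $(\pi_Y)_*(\pi_L^*[x_0]\,v)=(\pi_Y)_*(i_{x_0})_*i_{x_0}^*v=i_{x_0}^*v$. Similarly $T'=s_{y_0}^*$, once a morphic projection formula is available.

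So the obstacle you flag disappears if you define the left inverse directly as pullback along the section. You then need no morphic pushforward, no projection formula compatible with Heller's product, and no detour through Friedlander--Lawson duality. Multiplicativity of pullback along a closed embedding of smooth varieties suffices.

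Your pushforward formulation does buy uniformity with the transfers of Theorem~\ref{thm:ample-products} and Corollary~\ref{cor:bundle-birational}. It would also still work for a fibration having only a multisection of degree prime to the relevant torsion order. For the product at hand, however, the section argument is strictly lighter.
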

\begin{proof}
Take $W=X_L\times Y$ with $Y=C\times\mathbb P^2$, and pull back the two defining triples from the respective factors.  Both projections have sections, so naturality followed by restriction shows that nonvanishing modulo full indeterminacy persists.  For the first triple, both outer morphic classes are still zero, hence its entire realized bracket is $\{0\}$.  For the second, the cited result supplies a specified motivic value with nonzero morphic quotient and zero singular image; its pullback has the same properties by the section argument and naturality.  Thus the singular bracket contains zero.  We do not assert that the latter bracket has zero indeterminacy, nor that the two motivic brackets have the same degree or quotient.
\end{proof}

The corresponding motivic degrees on $W$ are $\CH^3(W,1)$ and $\CH^5(W,1)$; the morphic targets are $L^3H^5(W)$ and $L^5H^9(W)$.  In dimension eleven these are dual to $L_8H_{17}(W)$ and $L_6H_{13}(W)$, respectively.  This degree distinction prevents an unwarranted identification of the two secondary classes.

\section{Formality consequences}\label{sec:formality}
\begin{definition}
An associative weight-graded differential graded $R$-algebra $A$ is \emph{formal over $R$} if a zigzag of weight-preserving multiplicative quasi-isomorphisms connects it to its cohomology algebra $H^*(A)$ with zero differential.
\end{definition}

\begin{lemma}[Classical Massey obstruction and localization]\label{lem:formality}
Every defined triple bracket in a formal algebra contains zero.  A nonzero integral triple quotient class of exact order $\ell$, with $\ell$ prime, remains nonzero after derived extension to $\Z_{(\ell)}$.
\end{lemma}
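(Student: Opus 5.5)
The lemma has two independent assertions, and I will prove them separately.

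\emph{First assertion.} Let $A$ be formal over $R$, with a zigzag of weight-preserving multiplicative quasi-isomorphisms $A\simeq H^*(A)$. By Proposition~\ref{prop:naturality}, each multiplicative quasi-isomorphism identifies triple brackets under the induced cohomology isomorphism, together with their full indeterminacies. Following the zigzag therefore identifies $\langle x,y,z\rangle_A$ with the bracket of the corresponding classes in $H^*(A)$ with zero differential. In that model I take the classes themselves as cocycle representatives. The hypotheses $xy=yz=0$ hold on the nose, so $U=V=0$ is an admissible choice of null-homotopies. The resulting value $W=AV\pm UC$ is zero. Hence the bracket in $H^*(A)$ contains $0$, and transporting back shows that the original bracket contains $0$.

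\emph{Second assertion: setup.} Let $\ell$ be prime. Suppose $\bar w\in Q=\CH^s(X,1)/I_M$ is the class of a value $w$ and has exact order $\ell$. Since $\Z_{(\ell)}$ is flat over $\Z$, I use \eqref{eq:flat-quotient}: the full indeterminacy after derived extension is $I_{M,\Z_{(\ell)}}=\im(I_M\otimes\Z_{(\ell)})$, and
\[
 Q\otimes\Z_{(\ell)}\cong\CH^s(X,1;\Z_{(\ell)})/I_{M,\Z_{(\ell)}}.
\]
Transporting the defining system along the multiplicative map $M_X\to M_X\otimes^{\mathbf L}\Z_{(\ell)}$, Proposition~\ref{prop:naturality} shows that the localized bracket contains the image of $w$. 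Its quotient class is $\bar w\otimes1$ under the displayed isomorphism.

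\emph{Second assertion: nonvanishing.} It remains to show $\bar w\otimes1\neq0$ in $Q\otimes\Z_{(\ell)}$. Localization is exact, and $Q\otimes\Z_{(\ell)}=S^{-1}Q$ with $S=\Z\setminus\ell\Z$. So $\bar w\otimes1=0$ exactly when $t\bar w=0$ for some integer $t$ prime to $\ell$. The order of $\bar w$ is $\ell$, so $\ell\mid t$, which is impossible. Hence the localized quotient class is nonzero. Since a bracket is a coset of its indeterminacy, this means the localized bracket does not contain $0$.

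\emph{Main obstacle.} The only delicate step is justifying that the derived-extension indeterminacy equals the image of the integral one and that the quotients correspond. This is exactly \eqref{eq:flat-quotient}, which requires flatness of $\Z_{(\ell)}$, and I will cite it rather than reprove it.
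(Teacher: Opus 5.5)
Your proposal is correct and follows the paper's proof essentially step for step: the first assertion comes from transporting the bracket along the zigzag via Proposition~\ref{prop:naturality} and choosing zero null-homotopies in the zero-differential model, and the second from the fact that flat localization commutes with cohomology, the indeterminacy images and the quotient (your citation of \eqref{eq:flat-quotient}), together with the observation that an element of order $\ell$ is not killed by any integer prime to $\ell$. You are slightly more explicit than the paper in checking that the localized bracket contains the image of $w$ and is a coset of the localized full indeterminacy, but the argument is the same.
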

\begin{proof}
The first assertion is classical \cite{May1969}: Proposition~\ref{prop:naturality} identifies full triple brackets under multiplicative quasi-isomorphisms, while in an algebra with zero differential one may choose zero null-homotopies.  For the second assertion, flat localization commutes with cohomology, with the images of multiplication defining the two indeterminacy summands, and with quotienting by their sum.  A nonzero element of order $\ell$ cannot be killed by an integer prime to $\ell$.
\end{proof}

\begin{corollary}[Motivic nonformality]\label{cor:nonformal-genus}
Let $C$ be a smooth connected complex projective curve of positive genus and let $r\ge0$. The associative weight-graded models of the multiplicative motivic theory of $C\times\mathbb P^r$ are nonformal over $\Z$ and over every $\Z_{(\ell)}$.  The same assertion holds for every product $Z=C_1\times\cdots\times C_d$ in Theorem~\ref{thm:ample-products}; in particular, it holds for the ample canonical examples of Theorem~\ref{thm:intro-ample}.
\end{corollary}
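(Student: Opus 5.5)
The plan is to combine Lemma~\ref{lem:formality} with the nonvanishing quotient classes already constructed. For $C\times\mathbb P^r$ with $r\ge1$, Theorem~\ref{thm:projective-transfer} gives, for every $n\ge2$, a defined triple of divisor classes with zero full indeterminacy and a value $\eta_r$ of exact order $n$. For $r=0$ we use the curve itself: Corollary~\ref{cor:all-curves} gives $\langle\alpha,n,\gamma\rangle_{\CH}=\{\theta\}$ with $\theta$ of exact order $n$ and zero indeterminacy. For a product $Z$ of curves, Theorem~\ref{thm:ample-products} gives a value $\eta_Z$ whose class modulo the full indeterminacy has exact order $n$. In each case the bracket is defined but does not contain zero. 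By Lemma~\ref{lem:formality} and Proposition~\ref{prop:naturality}, no associative weight-graded model can be formal over $\Z$. A zigzag of multiplicative quasi-isomorphisms to the cohomology algebra would identify the full brackets and their indeterminacies. In the cohomology algebra, zero null-homotopies would then force the bracket to contain zero.

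For $\Z_{(\ell)}$, fix a prime $\ell$ and take $n=\ell$ in the constructions. The integral quotient class then has exact order $\ell$. The second part of Lemma~\ref{lem:formality}, together with the flat identification \eqref{eq:flat-quotient}, shows that the localized bracket, computed in $M_X\otimes^{\mathbf L}_{\Z}\Z_{(\ell)}$, is the image of the integral coset. It is therefore nonzero modulo $I_{M,\Z_{(\ell)}}=\im(I_M\otimes\Z_{(\ell)})$. Concretely, $(Q_M)\otimes\Z_{(\ell)}$ retains the $\ell$-torsion element, because inverting integers prime to $\ell$ cannot kill an element of order $\ell$. Applying the first part of the lemma over $\Z_{(\ell)}$ then gives nonformality over $\Z_{(\ell)}$. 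The ample canonical statement follows because the products of curves of genus at least two in Theorem~\ref{thm:intro-ample} are instances of $Z$.

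The main point needing care is the localization step when the indeterminacy is nonzero, as for $Z$. There I would argue through the detecting homomorphism rather than through vanishing. Tensoring $T_3$ with $\Z_{(\ell)}$ still kills $I_Z\otimes\Z_{(\ell)}$, by the same projection-formula identities. It also sends the localized value to $\theta\otimes1$, which is nonzero in $\CH^2(C_1,1)\otimes\Z_{(\ell)}$ because $\theta$ has order $\ell$.

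A secondary check is that the formality notion is invariant under change of model. This follows from Proposition~\ref{prop:naturality} applied along zigzags of weight-preserving quasi-isomorphisms, so the conclusion does not depend on which associative model is chosen.
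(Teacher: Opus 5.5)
Your proposal is correct and matches the paper's proof: taking $n=\ell$ in Corollary~\ref{cor:all-curves}, Theorem~\ref{thm:projective-transfer}, or Theorem~\ref{thm:ample-products} and then applying Lemma~\ref{lem:formality} is exactly the argument the paper gives. Your extra detour through $T_3\otimes\Z_{(\ell)}$ for the product $Z$ is valid but unnecessary, because the second part of the lemma already applies to the quotient class in $Q_M$ via \eqref{eq:flat-quotient}, whether or not $I_M$ vanishes.
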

\begin{proof}
For every prime $\ell$, Corollary~\ref{cor:all-curves}, Theorem~\ref{thm:projective-transfer}, or Theorem~\ref{thm:ample-products}, as appropriate, gives a motivic Massey class of exact order $\ell$ modulo its full indeterminacy.  Apply Lemma~\ref{lem:formality}.
\end{proof}

\begin{corollary}[Formality obstruction after transfer]\label{cor:transferred-nonformal}
Let $V$ be a variety in Corollary~\ref{cor:nonformal-genus}. If $f:W\to V$ is a surjective generically finite morphism of degree $e$ between smooth connected projective varieties of the same dimension, then the multiplicative motivic theory of $W$ is nonformal over $\Z$ and over $\Z_{(\ell)}$ for every prime $\ell\nmid e$. Projective-bundle pullbacks and birational morphisms between smooth projective varieties preserve nonformality over every $\Z_{(\ell)}$. These statements apply in particular to the cyclic covers of Corollary~\ref{cor:cyclic-cover}.
\end{corollary}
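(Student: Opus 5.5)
The plan is to combine the coprime-degree transfer (Proposition~\ref{prop:coprime-transfer}) and Corollary~\ref{cor:bundle-birational} with the localization and obstruction statements of Lemma~\ref{lem:formality}. Fix a prime $\ell$. By Corollary~\ref{cor:nonformal-genus} and its proof, the variety $V$ carries a defined triple $a,b,c$ of divisor classes with a specified value $w\in\langle a,b,c\rangle_{\CH}$ such that $\ell w=0$ and the class of $w$ modulo the full indeterminacy $I$ has exact order $\ell$. This holds for every $\ell$ and both geometric families: $C\times\mathbb P^r$ via Corollary~\ref{cor:all-curves} and Theorem~\ref{thm:projective-transfer}, and products of curves via Theorem~\ref{thm:ample-products}.

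First, assume $\ell\nmid e$. Proposition~\ref{prop:coprime-transfer}, applied with $n=\ell$, says the pulled-back triple $f^*a,f^*b,f^*c$ has the value $f^*w$. It also says the class of $f^*w$ modulo the full indeterminacy $I'$ on $W$ has exact order $\ell$, so in particular it is nonzero. The triple on $W$ is defined because pullback is multiplicative, so $f^*a\,f^*b=f^*(ab)=0$ and likewise $f^*b\,f^*c=0$. Lemma~\ref{lem:formality} then shows that this quotient class stays nonzero after derived extension to $\Z_{(\ell)}$. The same lemma shows that a formal algebra has only brackets containing zero, and this applies to $\Z_{(\ell)}$ models via the flat identification \eqref{eq:flat-quotient}. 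Hence the model of $W$ is nonformal over $\Z_{(\ell)}$. Nonformality over $\Z$ follows from any single such prime, and one exists because only finitely many primes divide $e$. Alternatively, the integral nonzero class itself contradicts integral formality by the first half of Lemma~\ref{lem:formality}.

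Next, for birational morphisms, the degree is $1$, so every prime is coprime to it and the previous step applies for all $\ell$. For a projective bundle $q:\mathbb P(\mathcal E)\to V$, I would use Corollary~\ref{cor:bundle-birational}. Its operator $T(v)=q_*(\zeta^r v)$ satisfies the projection formula and $Tq^*=\mathrm{id}$, so it takes the role of $f_*$ in \eqref{eq:transfer-full} with effective degree $1$. This gives exact order $\ell$ on $\mathbb P(\mathcal E)$ for every $\ell$, and Lemma~\ref{lem:formality} finishes. The cyclic covers of Corollary~\ref{cor:cyclic-cover} are finite flat of degree $e$ over $C\times\mathbb P^{d-1}$, which is a variety in Corollary~\ref{cor:nonformal-genus}, so they are a special case of the first step.

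The main point to check is the localization step: an integral quotient class of exact order $\ell$ must give a nonzero class in $\CH^s(W,1;\Z_{(\ell)})/I'_{\Z_{(\ell)}}$, and this quotient must be the full indeterminacy of the localized bracket. This rests on \eqref{eq:flat-quotient}: by flatness, the localized quotient equals $(\CH^s(W,1)/I')\otimes\Z_{(\ell)}$, and an element of order $\ell$ survives there. I would state this explicitly instead of relying only on the citation of Lemma~\ref{lem:formality}.
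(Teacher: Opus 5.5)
Your proposal is correct and matches the paper's proof. Both apply Proposition~\ref{prop:coprime-transfer} with $n=\ell$ and then Lemma~\ref{lem:formality} for each prime $\ell\nmid e$, and both get the integral statement from the existence of such a prime; projective bundles and birational morphisms are handled by Corollary~\ref{cor:bundle-birational} (trace scalar one), and your explicit localization check via \eqref{eq:flat-quotient} just spells out what Lemma~\ref{lem:formality} already uses.
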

\begin{proof}
For each prime $\ell\nmid e$, choose on $V$ the bracket with quotient class of exact order $\ell$. Proposition~\ref{prop:coprime-transfer} preserves that order; Lemma~\ref{lem:formality} then obstructs integral and $\Z_{(\ell)}$-formality. There is always a prime not dividing $e$, which gives the integral assertion. For projective bundles and birational morphisms use Corollary~\ref{cor:bundle-birational}, whose trace scalar is one.
\end{proof}

These are obstructions for the motivic multiplicative enhancement.  They do not show that the morphic theory or integral singular cochains are nonformal: the particular realized brackets constructed here are zero.  They also do not give rational nonformality.  After rationalization the two outer torsion classes and the specified torsion value vanish.  Prime localization and rationalization have different effects and must not be conflated.

\section{Scope and significance}
The secondary invariant \eqref{eq:secondary-invariant} is obtained from the classical motivic Massey operation; its geometric torsion linking interpretation has the precedents discussed in the introduction.  The substantive separation proved here is Theorem~\ref{thm:fixed-realized-triple}: different full indeterminacy Chow invariants, even in one fixed target group, can have the same morphic classes and the same zero Lawson--Massey bracket.  This is more precise than exhibiting an unspecified element of a realization kernel.

The curve construction gives exact torsion by an actual cubical boundary and proves nonvanishing by pushforward to the Weil pairing.  Projective products preserve zero full indeterminacy.  Products of curves need a different argument: the full indeterminacy is not computed as zero, but the Projection formula homomorphism annihilates it and proves exact order in the quotient.  The coprime degree transfer preserves the exact order of the secondary quotient, without identifying its new indeterminacy with the old one. Proposition~\ref{prop:finite-zero} strengthens direct reduction of the specified values to vanishing of the complete reduced brackets. The comparison with \cite{HuLawson} concerns a different realization step and is not used in the curve based proofs.

Several limitations are essential.  All specified nonzero values constructed here are integral torsion. A bracket with nonzero indeterminacy may contain other values; no torsion assertion is made about every element of that coset.  Their outer realized classes are zero; the result is not a construction with three nonzero Lawson classes.  The values are scalar decomposable, so their images in the scalar-indecomposable quotient and in every ordinary quotient modulo an integer are zero.  None of these facts removes their nonzero full Massey quotient.  No rational nonvanishing, general operation on ordinary Chow groups of arbitrary singular varieties, or vanishing of Deligne regulators is asserted.  Finally, ``beyond Lawson--Massey'' is the nonfactorization statement for the realized ordered triple and its bracket, not a claim that a fixed variety is distinguished by all possible cohomological or homotopical invariants.

\end{document}